\newtheorem{theorem}{Theorem}[section]
\newtheorem{proposition}{Proposition}[section]
\newtheorem{lemma}{Lemma}[section]
\theoremstyle{remark}
\newtheorem{remark}{Remark}[section]
\newcommand{\R}{\mathbb{R}}
\newcommand{\N}{\mathbb{N}}
\newcommand{\C}{\mathbb{C}}
\providecommand{\abs}[1]{\left|#1\right|}
\newcommand{\PAR}[1]{\left(#1\right)}
\newcommand{\BPAR}[1]{\left[ #1\right]}
\newcommand{\DMAT}[1]{\left|\begin{matrix}#1\end{matrix}\right|}
\newcommand{\MAT}[1]{\left[\begin{matrix}#1\end{matrix}\right]}
\newcommand{\MINOR}[1]{\left(\begin{matrix}#1\end{matrix}\right)}
\newcommand{\diag}{\operatorname{diag}}
\newcommand{\THREEFIG}[5]{
  \begin{figure}[H]
   \centering
   \begin{subfigure}[t]{0.3\linewidth}
    \includegraphics[width=1\linewidth]{#1_a.png}
    \caption[]{#3} 
    \label{fig:#1_a}
   \end{subfigure}
   \begin{subfigure}[t]{0.3\linewidth}
    \includegraphics[width=1\linewidth]{#1_b.png}
    \caption[]{#4} 
    \label{fig:#1_b}
   \end{subfigure}
   \begin{subfigure}[t]{0.3\linewidth}
    \includegraphics[width=1\linewidth]{#1_c.png}
    \caption[]{#5} 
    \label{fig:#1_c}
   \end{subfigure}
   \begin{subfigure}[t]{0.06\linewidth}
    \includegraphics[width=1\linewidth]{#1_cbar.png}
   \end{subfigure}
   \caption[]{#2}
   \label{fig:#1}
  \end{figure}
}
\newcommand{\TWOFIG}[4]{
  \begin{figure}[H]
   \centering
   \begin{subfigure}[t]{0.446\linewidth}
    \includegraphics[width=1\linewidth]{#1_a.png}
    \caption[]{#3} 
    \label{fig:#1_a}
   \end{subfigure}
   \begin{subfigure}[t]{0.446\linewidth}
    \includegraphics[width=1\linewidth]{#1_b.png}
    \caption[]{#4} 
    \label{fig:#1_b}
   \end{subfigure}
   \begin{subfigure}[t]{0.089\linewidth}
    \includegraphics[width=1\linewidth]{#1_cbar.png}
   \end{subfigure}
   \caption[]{#2}
   \label{fig:#1}
  \end{figure}
}
\author{Karl Lundengård, Jonas Österberg, Sergei Silvestrov\\Mälardalen University}
\begin{document}
\title{Extreme points of the Vandermonde determinant on the sphere and some limits involving the generalized Vandermonde determinant}
\maketitle

\begin{abstract}
The values of the determinant of Vandermonde matrices with real elements are analyzed both visually and analytically over the unit sphere in various dimensions.
For three dimensions some generalized Vandermonde matrices are analyzed visually.
The extreme points of the ordinary Vandermonde determinant on finite-dimensional unit spheres are given as the roots of rescaled Hermite polynomials and a recursion relation is provided for the polynomial coefficients.
Analytical expressions for these roots are also given for dimension three to seven.
A transformation of the optimization problem is provided and some relations between the ordinary and generalized Vandermonde matrices involving limits are discussed.\\\\
\textbf{Keywords:} Vandermonde matrix, Determinants, Extreme points, Unit sphere, Generalized Vandermonde matrix.
\end{abstract}

\newpage

\tableofcontents

\newpage


\section{Introduction}

Here a \textit{generalized Vandermonde matrix} is determined by two vectors $\vec x_n=(x_1,\cdots,x_n)\in K^n$ and $\vec a_m=(a_1,\cdots,a_m)\in K^m$, where $K$ is usually the real or complex field, and is defined as
\begin{align} \label{def:gn}
 G_{mn}(\vec x_n,\vec a_m)=\BPAR{x_j^{a_i}}_{mn}.
\end{align}
For square matrices only one index is given, $G_n \equiv G_{nn}$.

Note that the term \emph{generalized Vandermonde matrix} has been used for several kinds of matrices that are not equivalent to \eqref{def:gn}, see \cite{kalman} for instance.

As a special case we have the ordinary Vandermonde matrices
\begin{align*}
  V_{mn}(\vec x_n)& =G_{mn}(\vec x_n,(0,1,\cdots,m-1))=\BPAR{x_j^{i-1}}_{mn} \\
                & =\MAT{
	                        1     &     1     & \cdots &   1    \\
	                       x_1    &    x_2    & \cdots &  x_n   \\
	                      \vdots  &  \vdots   & \ddots & \vdots \\
	                    x_1^{m-1} & x_2^{m-1} & \cdots & x_n^{m-1}}.
\end{align*}
Note that some authors use the transpose as the definition and possibly also let indices run from $0$.
All entries in the first row of Vandermonde matrices are ones and by considering $0^0=1$ this is true even when some $x_j$ is zero.
\\
In this article the following notation will sometimes be used:
\[ \vec{x}_I = (x_{i_1},x_{i_2},\ldots,x_{i_n}),~I = \{ i_1, i_2, \ldots, i_n \}.  \]
For the sake of convenience we will use $\vec x_n$ to mean $\vec{x}_{I_n}$ where $I_n = \{ 1, 2, \ldots, n \}$.
\\
\begin{proposition} \label{prop:vandermondeDeterminant}
The determinant of (square) Vandermonde matrices has the well known form
\[ v_n\equiv v_n(\vec{x}_n)\equiv \det V_n(\vec{x}_n)=\prod_{1\leq i<j\leq n}(x_j-x_i). \]
\end{proposition}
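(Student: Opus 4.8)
The standard proof of the Vandermonde determinant formula is by induction on $n$, and that is the route I would take. The plan is as follows.

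First I would establish the base case $n=1$ (where $v_1=1$, an empty product) or $n=2$ (where $v_2=x_2-x_1$ directly from the $2\times 2$ determinant), both of which are immediate.

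For the inductive step, assume the formula holds for $n-1$. The key idea is to perform column or row operations on $V_n(\vec{x}_n)$ that do not change the determinant. I would fix $x_1$ and, working from the bottom row upward, replace each row $R_i$ (for $i$ from $m=n$ down to $2$) by $R_i - x_1 R_{i-1}$; this kills the first entry of every row below the first, so the first column becomes $(1,0,\ldots,0)^T$. Expanding along that first column leaves an $(n-1)\times(n-1)$ determinant whose $(i,j)$ entry (with $j$ indexing $x_2,\ldots,x_n$) is $x_{j+1}^{i} - x_1 x_{j+1}^{i-1} = x_{j+1}^{i-1}(x_{j+1}-x_1)$. Factoring $(x_{j+1}-x_1)$ out of each column $j$ yields $\PAR{\prod_{j=2}^{n}(x_j-x_1)}\det V_{n-1}(\vec{x}_{\{2,\ldots,n\}})$. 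Applying the induction hypothesis to this smaller Vandermonde determinant gives $\prod_{2\leq i<j\leq n}(x_j-x_i)$, and combining the two products produces exactly $\prod_{1\leq i<j\leq n}(x_j-x_i)$, completing the induction.

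The main obstacle is purely bookkeeping: one must carry out the row-reduction carefully so that, after the operations, the reduced minor is genuinely again in Vandermonde form in the variables $x_2,\ldots,x_n$, and one must track the extracted factors $(x_j-x_1)$ correctly. An alternative I would mention is the polynomial-degree argument: viewing $v_n$ as a polynomial in $x_n$ of degree $n-1$, it vanishes whenever $x_n=x_i$ for $i<n$ (two equal columns), so it is divisible by $\prod_{i<n}(x_n-x_i)$; comparing the leading coefficient in $x_n$ with $v_{n-1}$ and iterating gives the result. Either way the argument is elementary; I would present the row-reduction version as the cleanest.
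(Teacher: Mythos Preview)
Your proof is correct and is the standard textbook argument. The paper, however, does not actually prove this proposition: it states the formula as ``well known'' and moves on without any argument. So there is nothing to compare against; your inductive row-reduction proof (and the alternative polynomial-divisibility argument you sketch) would both serve perfectly well as a proof the paper omits.
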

This determinant is also simply referred to as the \textit{Vandermonde determinant}.
The determinant of generalized Vandermonde matrices 
\[ g_n\equiv g_n(\vec{x}_n,\vec{a}_n)\equiv \det G_n(\vec{x}_n,\vec{a}_n) \] 
seem to defy such a simple statement in the general case, a partial treatment can be found in \cite{ernst}.

Vandermonde matrices have uses in polynomial interpolation.
The coefficients of the unique polynomial $c_0+c_1x+\cdots+c_{n-1}x^{n-1}$ that passes through $n$ points $(x_i,y_i)\in\C^2$ with distinct $x_i$ are
\[
 \MAT{c_0&c_1&\cdots&c_{n-1}}
 =
 \MAT{y_1&y_2&\cdots&y_n}
 \MAT{
	1 & 1 & \cdots & 1\\
	x_1 & x_2 & \cdots & x_n\\
	\vdots & \vdots & \ddots & \vdots\\
	x_1^{n-1} & x_2^{n-1} & \cdots & x_n^{n-1}
}^{-1}.
\]
In this context the $x_i$ are called \textit{nodes}.
Some other uses are in differential equations \cite{kalman} and time series analysis \cite{klein}.

In Section \ref{sec:visual3d} we introduce the reader to the behavior of the Vandermonde determinant $v_3(\vec{x}_3)=(x_3-x_2)(x_3-x_1)(x_2-x_1)$ by some introductory visualizations. We also consider $g_3$ for some choices of exponents.

In Section \ref{sec:analytical} we optimize the Vandermonde determinant $v_n$ over the unit sphere $S^{n-1}$ in $\mathbb{R}^n$ finding a rescaled Hermite polynomial whose roots give the extreme points.

In Section \ref{sec:analytical_v3} we arrive at the results for the special case $v_3$ in a slightly different way.

In Section \ref{sec:transformation} we present a transformation of the optimization problem into an equivalent form with known optimum value.

In Section \ref{sec:visualnd} we extend the range of visualizations to $v_4,\cdots,v_7$ by exploiting some analytical results.

In Section \ref{sec:power_limit} we prove some limits involving the the generalized Vandermonde matrix and determinant.


\subsection{Visual exploration in 3D} \label{sec:visual3d}

In this section we plot the values of the determinant 
\[ v_3(\vec x_3)=(x_3-x_2)(x_3-x_1)(x_2-x_1), \]
and also the generalized Vandermonde determinant $g_3(\vec x_3,\vec a_3)$ for three different choices of $\vec a_3$
over the unit sphere $x_1^2 + x_2^2 + x_3^2=1$ in $\mathbb{R}^3$.
Our plots are over the unit sphere but the determinant exhibits the same general behavior over centered spheres of any radius.
This follows directly from \ref{def:gn} and that exactly one element from each row appear in the determinant.
For any scalar $c$ we get
\begin{align*}
 g_n(c\vec x_n,\vec a_n)= \left[\prod_{i=1}^nc^{a_i}\right] g_n(\vec x_n,\vec a_n),
\end{align*}
which for $v_n$ becomes
\begin{align} \label{eq:vn_scalingx}
v_n(c\vec x_n)=c^{\displaystyle\frac{n(n-1)}{2}} v_n(\vec x_n),
\end{align}
and so the values over different radii differ only by a constant factor.

\THREEFIG{g3_012plots}{Plot of $v_3(\vec{x}_3)$ over the unit sphere.}
                      {Plot with respect to the regular $\vec{x}$-basis.}
                      {Plot with respect to the $\vec{t}$-basis, see \eqref{eq:xt_transform}.}
                      {Plot with respect to parametrization \eqref{eq:t_spherical}.}

In Figure \ref{fig:g3_012plots} we have plotted the value of $v_3(\vec{x}_3)$ over the unit sphere and traced curves where it vanishes as black lines.
The coordinates in Figure \ref{fig:g3_012plots_b} are related to $\vec{x}_3$ by
\begin{align}
  \vec x_3=
  \MAT{2 & 0 & 1\\-1&1&1\\-1&-1&1}
  \MAT{1/\sqrt{6}&0&0\\0&1/\sqrt{2}&0\\0&0&1/\sqrt{3}}
  \vec t \label{eq:xt_transform},
\end{align}
where the columns in the product of the two matrices are the basis vectors in $\R^3$.
The unit sphere in $\mathbb{R}^3$ can also be described using spherical coordinates. In Figure \ref{fig:g3_012plots_c} the following convention was used.
\begin{equation}
  \vec t(\theta,\phi)=\MAT{
  \cos(\phi) \sin(\theta)\\
  \sin(\phi)\\
  \cos(\phi) \cos(\theta)\\
  }. \label{eq:t_spherical}
\end{equation}
We will use this $\vec{t}$-basis and spherical coordinates throughout this section.

From the plots in Figure \ref{fig:g3_012plots} it can be seen that the number of extreme points for $v_3$ over the unit sphere seem to be $6=3!$.
It can also been seen that all extreme points seem to lie in the plane through the origin and orthogonal to an apparent symmetry axis in the direction $(1,1,1)$, the direction of $t_3$.
We will see later that the extreme points for $v_n$ indeed lie in the hyperplane $\sum_{i=1}^n x_i=0$ for all $n$, see Theorem \ref{thm:hyperplane}, and the number of extreme points for $v_n$ count $n!$, see Remark \ref{rem:n_factorial}.

The black lines where $v_3(\vec{x}_3)$ vanishes are actually the intersections between the sphere and the three planes $x_3-x_1=0$, $x_3-x_2=0$ and $x_2-x_1=0$, as these differences appear as factors in $v_3(\vec{x}_3)$.

We will see later on that the extreme points are the six points acquired from permuting the coordinates in \label{rem:permutations3D}
\begin{align*}
 \vec{x}_3=\frac{1}{\sqrt{2}} \left(-1,0,1\right).
\end{align*}
For reasons that will become clear in Section \ref{sec:roots_by_pol} it is also useful to think about these coordinates as the roots of the polynomial
\begin{align*}
 P_3(x) = x^3-\frac{1}{2}x.
\end{align*}

So far we have only considered the behavior of $v_3(\vec{x}_3)$, that is $g_3(\vec{x}_3,\vec a_3)$ with $\vec a_3=(0,1,2)$.
We now consider three generalized Vandermonde determinants, namely $g_3$ with $\vec a_3=(0,1,3)$, $\vec a_3=(0,2,3)$ and $\vec a_3=(1,2,3)$.
These three determinants show increasingly more structure and they all have a neat formula in terms of $v_3$ and the elementary symmetric polynomials
\begin{align*}
 e_{kn}=e_k(x_1,\cdots,x_n)=\sum_{1\leq i_1<i_2<\cdots<i_k\leq n} x_{i_1}x_{i_2}\cdots x_{i_k},
\end{align*}
where we will simply use $e_k$ whenever $n$ is clear from the context.

\THREEFIG{g3_013plots}{Plot of $g_3(\vec{x}_3,(0,1,3))$ over the unit sphere.}
                      {Plot with respect to the regular $\vec{x}$-basis.}
                      {Plot with respect to the $\vec{t}$-basis, see \eqref{eq:xt_transform}.}
                      {Plot with respect to angles given in \eqref{eq:t_spherical}.}

In Figure \ref{fig:g3_013plots} we see the determinant
\begin{align*}
 g_3(\vec{x}_3,(0,1,3))=\DMAT{1&1&1\\x_1&x_2&x_3\\x_1^3&x_2^3&x_3^3}=v_3(\vec{x}_3)e_{1},
\end{align*}
plotted over the unit sphere.
The expression $v_3(\vec{x}_3)e_{1}$ is easy to derive, the $v_3(\vec{x}_3)$ is there since the determinant must vanish whenever any two columns are equal, which is exactly what the Vandermonde determinant expresses. The $e_1$ follows by a simple polynomial division.
As can be seen in the plots we have an extra black circle where the determinant vanishes compared to Figure \ref{fig:g3_012plots}.
This circle lies in the plane $e_1=x_1+x_2+x_3=0$ where we previously found the extreme points of $v_3(\vec{x}_3)$ and thus doubles the number of extreme points to $2\cdot 3!$.

A similar treatment can be made of the remaining two generalized determinants that we are interested in, plotted in the following two figures.

\THREEFIG{g3_023plots}{Plot of $g_3(\vec{x}_3,(0,2,3))$ over the unit sphere.}
                      {Plot with respect to the regular $\vec{x}$-basis.}
                      {Plot with respect to the $\vec{t}$-basis, see \eqref{eq:xt_transform}.}
                      {Plot with respect to parametrization \eqref{eq:t_spherical}.}
\THREEFIG{g3_123plots}{Plot of $g_3(\vec{x}_3,(1,2,3))$ over the unit sphere.}
                      {Plot with respect to the regular $\vec{x}$-basis.}
                      {Plot with respect to the $\vec{t}$-basis, see \eqref{eq:xt_transform}.}
                      {Plot with respect to parametrization \eqref{eq:t_spherical}.}

The four determinants treated so far are collected in Table \ref{tab:g3determinantlist}.
\begin{table}
 \centering
 \begin{tabular}{cl}
  \toprule
  $\vec a_3$    & $g_3(\vec{x}_3,\vec a_3)$\\\midrule
  $(0,1,2)$   & $v_3(\vec{x}_3)e_0=(x_3-x_2)(x_3-x_1)(x_2-x_1)$\\
  $(0,1,3)$   & $v_3(\vec{x}_3)e_1=(x_3-x_2)(x_3-x_1)(x_2-x_1)(x_1+x_2+x_3)$\\
  $(0,2,3)$   & $v_3(\vec{x}_3)e_2=(x_3-x_2)(x_3-x_1)(x_2-x_1)(x_1x_2+x_1x_3+x_2x_3)$\\
  $(1,2,3)$   & $v_3(\vec{x}_3)e_3=(x_3-x_2)(x_3-x_1)(x_2-x_1)x_1x_2x_3$ \\
  \bottomrule
 \end{tabular}
 \caption{Table of some determinants of generalized Vandermonde matrices.}
 \label{tab:g3determinantlist}
\end{table}
Derivation of these determinants is straight forward.
We note that all but one of them vanish on a set of planes through the origin.
For $\vec a=(0,2,3)$ we have the usual Vandermonde planes but the intersection of $e_2=0$ and the unit sphere occur at two circles.
\begin{align*}
 x_1x_2+x_1x_3+x_2x_3
 &=\frac{1}{2}\PAR{ (x_1+x_2+x_3)^2 - (x_1^2+x_2^2+x_3^2)}\\
 =\frac{1}{2}\PAR{ (x_1+x_2+x_3)^2 - 1 }
 &=\frac{1}{2}\PAR{ x_1+x_2+x_3 + 1 }\PAR{ x_1+x_2+x_3 - 1 },
\end{align*}
and so $g_3(\vec x_3,(0,2,3))$ vanish on the sphere on two circles lying on the planes $x_1+x_2+x_3 + 1=0$ and $x_1+x_2+x_3 - 1=0$.
These can be seen in Figure \ref{fig:g3_023plots} as the two black circles perpendicular to the direction $(1,1,1)$.

Note also that while $v_3$ and $g_3(\vec x_3,(0,1,3))$ have the same absolute value on all their respective extreme points (by symmetry) we have that both $g_3(\vec x_3,(0,2,3))$ and $g_3(\vec x_3,(1,2,3))$ have different absolute values for some of their respective extreme points.



\section{Optimizing the Vandermonde determinant \\over the unit sphere} \label{sec:analytical}
In this section we will consider the extreme points of the Vandermonde determinant on the $n$-dimensional unit sphere in $\mathbb{R}^n$. We want both to find an analytical solution and to identify some properties of the determinant that can help us to visualize it in some area around the extreme points in dimensions $n > 3$.


\subsection{The extreme points given by roots of a polynomial}
\label{sec:roots_by_pol}
The extreme points of the Vandermonde determinant on the unit sphere in $\mathbb{R}^n$ are known and given by Theorem \ref{thm:extreme_points_analytical} where we present a special case of Theorem 6.7.3 in 'Orthogonal polynomials' by G\'{a}bor Szeg\H{o} \cite{szego}.
We will also provide a proof that is more explicit than the one in \cite{szego}
and that exposes more of the rich symmetric properties of the Vandermonde determinant.
For the sake of convenience some properties related to the extreme points of the Vandermonde determinant defined by real vectors $\vec x_n$ will be presented before Theorem \ref{thm:extreme_points_analytical}.


\begin{theorem}
 \label{thm:partialDerivatives}
 For any $1 \leq k \leq n$
 \begin{equation} 
  \label{eq:partialDerivatives}
  \frac{\partial v_n}{\partial x_k} = \sum_{\substack{i=1\\i\neq k}}^{n} \frac{v_n(\vec{x}_n)}{x_k-x_i}
 \end{equation}
\end{theorem}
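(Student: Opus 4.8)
The plan is to differentiate the product formula of Proposition \ref{prop:vandermondeDeterminant} directly. Fix $k$ and split the factors of $v_n(\vec x_n)=\prod_{1\leq i<j\leq n}(x_j-x_i)$ into those that involve $x_k$ and those that do not. The factors involving $x_k$ are precisely $(x_k-x_i)$ for $i<k$ and $(x_j-x_k)$ for $j>k$, while every other factor is constant as a function of $x_k$. Thus
\[
 v_n(\vec x_n)=C_k\cdot\prod_{i=1}^{k-1}(x_k-x_i)\cdot\prod_{j=k+1}^{n}(x_j-x_k),
\]
where $C_k=\prod_{\substack{1\leq i<j\leq n\\ i\neq k,\; j\neq k}}(x_j-x_i)$ does not depend on $x_k$.

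Next I would apply the product rule. On the open dense set where all coordinates are distinct (equivalently $v_n\neq 0$) logarithmic differentiation gives
\[
 \frac{1}{v_n}\frac{\partial v_n}{\partial x_k}=\sum_{i=1}^{k-1}\frac{1}{x_k-x_i}+\sum_{j=k+1}^{n}\frac{-1}{x_j-x_k}=\sum_{i=1}^{k-1}\frac{1}{x_k-x_i}+\sum_{j=k+1}^{n}\frac{1}{x_k-x_j},
\]
where in the last step I rewrote $-\tfrac{1}{x_j-x_k}=\tfrac{1}{x_k-x_j}$. Recombining the two sums into a single sum over $i\neq k$ and multiplying through by $v_n$ yields
\[
 \frac{\partial v_n}{\partial x_k}=v_n(\vec x_n)\sum_{\substack{i=1\\ i\neq k}}^{n}\frac{1}{x_k-x_i}=\sum_{\substack{i=1\\ i\neq k}}^{n}\frac{v_n(\vec x_n)}{x_k-x_i},
\]
which is \eqref{eq:partialDerivatives}.

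To obtain the identity everywhere, and not merely off the vanishing locus of $v_n$, I note that each summand $\frac{v_n}{x_k-x_i}$ is in fact a polynomial, since the factor $(x_k-x_i)$ divides $v_n$ by Proposition \ref{prop:vandermondeDeterminant}; hence both sides of \eqref{eq:partialDerivatives} are polynomials in $\vec x_n$ agreeing on a dense set, so they agree identically. Alternatively one can sidestep this point entirely by using the ordinary product rule instead of its logarithmic form on the factorization above, so that each term appears as the constant $\pm 1$ times the product of the remaining linear factors, and no division is ever performed. There is no genuine obstacle here; the only things requiring a little care are the sign bookkeeping for $i<k$ versus $i>k$ and the minor technical point, resolved by the polynomial-identity remark, that logarithmic differentiation is only literally licit where $v_n\neq 0$.
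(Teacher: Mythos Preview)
Your proof is correct and takes a genuinely different route from the paper. The paper proceeds by first establishing a recursive lemma (Lemma~\ref{lem:partialDerivatives}) based on the factorization $v_n=\bigl[\prod_{i=1}^{n-1}(x_n-x_i)\bigr]v_{n-1}$, giving separate formulas for $\partial v_n/\partial x_k$ when $k<n$ and when $k=n$, and then closes by induction on $n$ with base case $n=2$. You instead differentiate the full product formula of Proposition~\ref{prop:vandermondeDeterminant} directly via logarithmic differentiation, with a clean remark that both sides are polynomials so the identity extends from the dense set $\{v_n\neq 0\}$ to all of $K^n$. Your argument is shorter and avoids the induction scaffolding entirely; the paper's detour through Lemma~\ref{lem:partialDerivatives} does pay a small dividend later, since that lemma is reused verbatim in the proof of Lemma~\ref{lem:sumPartDer}, but that result is also an easy consequence of the formula you have just established.
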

This theorem will be proved after the introduction of the following useful lemma:
\\
\begin{lemma}
 \label{lem:partialDerivatives}
 For any $1 \leq k \leq n-1$
 \begin{equation}
  \label{eq:partialDerNotLast}
  \frac{\partial v_n}{\partial x_k} = -\frac{v_n(\vec{x}_n)}{x_n-x_k} + \left[\prod_{i=1}^{n-1} (x_n-x_i)\right] \frac{\partial v_{n-1}}{\partial x_k}
 \end{equation}
 and
 \begin{equation}
  \label{eq:partialDerLast}
  \frac{\partial v_n}{\partial x_n} = \sum_{i=1}^{n-1} \frac{v_n(\vec{x}_n)}{x_n-x_i}.
 \end{equation}
\end{lemma}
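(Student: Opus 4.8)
The plan is to exploit the product formula from Proposition \ref{prop:vandermondeDeterminant} and isolate the factors that involve $x_n$. Writing $v_n(\vec x_n) = \prod_{1\le i<j\le n}(x_j-x_i)$, the factors with $j=n$ are exactly $\prod_{i=1}^{n-1}(x_n-x_i)$, while the remaining factors are precisely those of $v_{n-1}(\vec x_{n-1})$. This gives the key identity
\[
 v_n(\vec x_n) = \left[\prod_{i=1}^{n-1}(x_n-x_i)\right] v_{n-1}(\vec x_{n-1}),
\]
in which the bracketed product carries all the dependence on $x_n$ and $v_{n-1}(\vec x_{n-1})$ carries none. Everything then reduces to differentiating this single product.

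For \eqref{eq:partialDerNotLast}, fix $1\le k\le n-1$ and differentiate the identity above with respect to $x_k$ by the product rule. Only the $i=k$ factor of $\prod_{i=1}^{n-1}(x_n-x_i)$ depends on $x_k$, and its $x_k$-derivative is $-1$, so $\partial v_n/\partial x_k$ splits into two terms: the first is $\bigl(-\prod_{i\neq k}(x_n-x_i)\bigr)\,v_{n-1}(\vec x_{n-1}) = -\bigl[\prod_{i=1}^{n-1}(x_n-x_i)\bigr]v_{n-1}(\vec x_{n-1})/(x_n-x_k) = -v_n(\vec x_n)/(x_n-x_k)$, and the second is $\bigl[\prod_{i=1}^{n-1}(x_n-x_i)\bigr]\,\partial v_{n-1}/\partial x_k$, which is exactly the right-hand side of \eqref{eq:partialDerNotLast}.

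For \eqref{eq:partialDerLast}, differentiate the same identity with respect to $x_n$; since $v_{n-1}(\vec x_{n-1})$ does not depend on $x_n$ it is a constant factor, and $\partial_{x_n}\prod_{i=1}^{n-1}(x_n-x_i) = \sum_{j=1}^{n-1}\prod_{i\neq j}(x_n-x_i) = \sum_{j=1}^{n-1}\bigl[\prod_{i=1}^{n-1}(x_n-x_i)\bigr]/(x_n-x_j)$. Multiplying through by $v_{n-1}(\vec x_{n-1})$ collapses each summand to $v_n(\vec x_n)/(x_n-x_j)$, giving \eqref{eq:partialDerLast}.

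There is no serious obstacle here; the only point worth noting is that the formulas as written involve denominators $x_n-x_k$, so strictly speaking they are stated for configurations where those differences are nonzero — which in particular holds near the extreme points on the sphere, where the coordinates are distinct. Alternatively, one may read every step as a polynomial identity after clearing denominators, so the conclusions hold for all $\vec x_n\in\R^n$. With the lemma in hand, Theorem \ref{thm:partialDerivatives} follows by induction on $n$: \eqref{eq:partialDerLast} is the base case $k=n$, and \eqref{eq:partialDerNotLast} together with the inductive formula for $\partial v_{n-1}/\partial x_k$ yields \eqref{eq:partialDerivatives}.
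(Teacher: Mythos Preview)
Your proof is correct and follows essentially the same approach as the paper: both start from the recursive factorization $v_n(\vec x_n)=\bigl[\prod_{i=1}^{n-1}(x_n-x_i)\bigr]v_{n-1}(\vec x_{n-1})$ and obtain the two formulas by applying the product rule. Your derivation of \eqref{eq:partialDerLast} is in fact a bit more direct than the paper's, which peels off the factors $(x_n-x_i)$ one at a time rather than differentiating the full product in one stroke as you do.
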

\begin{proof}
 Note that the determinant can be described recursively
 \begin{align} 
  \label{eq:recDet}
  v_n(\vec{x}_n) &= \left[\prod_{i = 1}^{n-1} (x_n-x_i)\right] \prod_{1 \leq i < j \leq n-1} (x_j-x_i) \nonumber \\
              &= \left[\prod_{i = 1}^{n-1} (x_n-x_i)\right] v_{n-1}(\vec{x}_{n-1}).
 \end{align}
  Formula \eqref{eq:partialDerNotLast} follows immediately from applying the differentiation formula for products on \eqref{eq:recDet}. Formula \eqref{eq:partialDerLast} follows from \eqref{eq:recDet}, the differentiation rule for products and that $v_{n-1}(\vec{x}_{n-1})$ is independent of $x_n$.
 \begin{align*}
  \frac{\partial v_n}{\partial x_n} =& \frac{v_{n-1}(\vec{x}_{n-1})}{x_n-x_1}\prod_{i = 1}^{n-1} (x_n-x_i) \\
                                     &+ (x_n-x_1) \frac{\partial}{\partial x_n}\left(\frac{v_{n-1}(\vec{x}_{n-1})}{x_n-x_1}\prod_{i = 1}^{n-1} (x_n-x_i) \right) \\
                                    =& \frac{v_n(\vec{x}_n)}{x_n-x_1}+\frac{v_n(\vec{x}_n)}{x_n-x_2} \\
                                     &+ (x_n-x_1)(x_n-x_2) \frac{\partial}{\partial x_n}\left( \frac{v_n(\vec{x}_n)}{(x_n-x_1)(x_n-x_2)}\right) \\
                                    =& \sum_{i=1}^{n-1} \frac{v_n(\vec{x}_n)}{x_n-x_i} + \left[\prod_{i = 1}^{n-1} (x_n-x_i)\right] \frac{\partial v_{n-1}}{\partial x_n} = \sum_{i=1}^{n-1} \frac{v_n(\vec{x}_n)}{x_n-x_i}.
 \end{align*} 
\end{proof}

\begin{proof}[Proof of Theorem \ref{thm:partialDerivatives}]
 Using lemma \ref{lem:partialDerivatives} we can see that for $k = n$ Equation \eqref{eq:partialDerivatives} follows immediately from \eqref{eq:partialDerLast}. 
 The case $1 \leq k < n$ will be proved using induction.
 Using \eqref{eq:partialDerNotLast} gives
 \begin{align*}
  \frac{\partial v_n}{\partial x_k} &= -\frac{v_n(\vec{x}_n)}{x_n-x_k} + \left[\prod_{i=1}^{n-1} (x_n-x_i)\right] \frac{\partial v_{n-1}}{\partial x_k}.
 \end{align*}
 Supposing that formula \eqref{eq:partialDerivatives} is true for $n-1$ results in
 \begin{align*}
  \frac{\partial v_n}{\partial x_k} =& -\frac{v_n(\vec{x}_n)}{x_n-x_k} + \left[\prod_{i=1}^{n-1} (x_n-x_i)\right] \sum_{\substack{i=1\\i\neq k}}^{n-1} \frac{v_{n-1}(\vec{x}_{n-1})}{x_k-x_i} \\
                                    =& \frac{v_n(\vec{x}_n)}{x_k-x_n} + \sum_{\substack{i=1\\i\neq k}}^{n-1} \frac{v_n(\vec{x}_n)}{x_k-x_i}\\
                                    =& \sum_{\substack{i=1\\i\neq k}}^{n} \frac{v_n(\vec{x}_n)}{x_k-x_i}.
 \end{align*}
 Showing that \eqref{eq:partialDerivatives} is true for $n=2$ completes the proof
 \begin{align*} 
  \frac{\partial v_2}{\partial x_1} = \frac{\partial}{\partial x_1}(x_2-x_1) = -1 &= \frac{x_2-x_1}{x_1-x_2} = \sum_{\substack{i=1\\i\neq 1}}^{2} \frac{v_2(\vec{x}_2)}{x_1-x_i} \\
  \frac{\partial v_2}{\partial x_2} = \frac{\partial}{\partial x_2}(x_2-x_1) = 1 &= \frac{x_2-x_1}{x_2-x_1} = \sum_{\substack{i=1\\i\neq 2}}^{2} \frac{v_2(\vec{x}_2)}{x_2-x_i}.
 \end{align*}
\end{proof}
\begin{theorem}
 \label{thm:hyperplane}
 The extreme points of $v_n(\vec{x}_n)$ on the unit sphere can all be found in the hyperplane defined by
 \begin{equation}
  \sum_{i=1}^{n} x_i = 0.
 \end{equation}
\end{theorem}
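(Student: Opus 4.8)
The plan is to apply the method of Lagrange multipliers, feed in the derivative formula of Theorem~\ref{thm:partialDerivatives}, and then exploit the antisymmetry of the summand $1/(x_k-x_i)$.

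First I would write down the first-order conditions. Since the constraint gradient $\nabla\bigl(\sum_i x_i^2\bigr)=2\vec x_n$ never vanishes on the unit sphere, any extreme point $\vec x_n$ of $v_n$ (a point where the maximum or minimum of $v_n$ on $S^{n-1}$ is attained) satisfies $\partial v_n/\partial x_k=2\lambda x_k$ for $k=1,\dots,n$ and some $\lambda\in\R$. Using \eqref{eq:partialDerivatives} this becomes
\[
 \sum_{\substack{i=1\\i\neq k}}^{n}\frac{v_n(\vec x_n)}{x_k-x_i}=2\lambda x_k,\qquad k=1,\dots,n.
\]

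Next I would sum these $n$ equations over $k$. The left-hand side is $v_n(\vec x_n)$ times $\sum_{i\neq k}\frac{1}{x_k-x_i}$ over all ordered pairs $(i,k)$ with $i\neq k$; grouping the term for $(i,k)$ with the term for $(k,i)$ gives $\frac{1}{x_k-x_i}+\frac{1}{x_i-x_k}=0$, so the whole left-hand side is $0$, while the right-hand side is $2\lambda\sum_{k=1}^{n}x_k$. Hence $2\lambda\sum_{k=1}^{n}x_k=0$, and the proof reduces to showing $\lambda\neq0$.

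For that last step I would use homogeneity. By \eqref{eq:vn_scalingx}, $v_n$ is homogeneous of degree $\tfrac{n(n-1)}{2}$, so Euler's relation $\sum_k x_k\,\partial v_n/\partial x_k=\tfrac{n(n-1)}{2}v_n(\vec x_n)$ combined with the Lagrange conditions and $\sum_k x_k^2=1$ yields $2\lambda=\tfrac{n(n-1)}{2}v_n(\vec x_n)$; equivalently one multiplies the $k$-th equation above by $x_k$, sums, and uses $\frac{x_k}{x_k-x_i}+\frac{x_i}{x_i-x_k}=1$. Since exchanging two coordinates reverses the sign of $v_n$, the determinant takes both signs on $S^{n-1}$, so its maximum is strictly positive and its minimum strictly negative; thus $v_n(\vec x_n)\neq0$ at any extreme point, hence $\lambda\neq0$, and therefore $\sum_{k=1}^{n}x_k=0$. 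The one point demanding care is precisely this nonvanishing: at configurations where two coordinates coincide one has $v_n=0$ and possibly $\lambda=0$, but such configurations are never maximizers or minimizers of $v_n$, so they do not count among the extreme points.
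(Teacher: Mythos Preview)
Your proof is correct and follows the same overall strategy as the paper: apply Lagrange multipliers, sum the first-order conditions to obtain $\lambda\sum_k x_k=0$, and then exclude $\lambda=0$ via homogeneity. The execution differs in two small but nice ways. First, the paper establishes $\sum_k\partial v_n/\partial x_k=0$ as a separate lemma (Lemma~\ref{lem:sumPartDer}) proved by induction on $n$ using the recursive factorization of $v_n$; you instead read it off directly from the formula of Theorem~\ref{thm:partialDerivatives} via the antisymmetry $\tfrac{1}{x_k-x_i}+\tfrac{1}{x_i-x_k}=0$, which is shorter and avoids the induction entirely. Second, to rule out $\lambda=0$ the paper argues that if all $\partial v_n/\partial x_k$ vanish then the scaling relation \eqref{eq:vn_scalingx} forces $v_n=0$; you make this quantitative by computing $2\lambda=\tfrac{n(n-1)}{2}v_n(\vec x_n)$ from Euler's relation (equivalently from the pairing $\tfrac{x_k}{x_k-x_i}+\tfrac{x_i}{x_i-x_k}=1$), which is exactly the content of Lemma~\ref{lem:vndotgradient} appearing later in the paper. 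Your explicit justification that $v_n$ attains strictly positive and strictly negative values on $S^{n-1}$ is also a bit more careful than the paper's ``of no interest to us''.
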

This theorem will be proved after the introduction of the following useful lemma:
\\
\begin{lemma}
 \label{lem:sumPartDer}
 For any $n \geq 2$ the sum of the partial derivatives of $v_n(\vec{x}_n)$ will be zero.
 \begin{equation}
  \label{eq:sumPartDer}
  \sum_{k=1}^{n} \frac{\partial v_n}{\partial x_k} = 0.
 \end{equation}
\end{lemma}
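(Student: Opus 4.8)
The plan is to use Theorem \ref{thm:partialDerivatives}, which already gives each partial derivative as a sum of simple rational expressions. Summing \eqref{eq:partialDerivatives} over $k$ yields
\[
\sum_{k=1}^n \frac{\partial v_n}{\partial x_k} = v_n(\vec x_n) \sum_{k=1}^n \sum_{\substack{i=1\\ i\neq k}}^n \frac{1}{x_k-x_i}.
\]
So the whole claim reduces to showing that the double sum $\sum_{k}\sum_{i\neq k} (x_k-x_i)^{-1}$ is zero. The key observation is that this double sum ranges over all ordered pairs $(k,i)$ with $k\neq i$, and these pairs come in twos: the pair $(k,i)$ contributes $\frac{1}{x_k-x_i}$ and the pair $(i,k)$ contributes $\frac{1}{x_i-x_k} = -\frac{1}{x_k-x_i}$. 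Hence the terms cancel pairwise and the sum is $0$. This is really the whole argument; I would write it as: group the $n(n-1)$ terms into $\binom{n}{2}$ pairs indexed by unordered $\{i,k\}$, each pair summing to zero.

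Strictly speaking one should note this manipulation is valid on the open dense set where all coordinates are distinct (so the expressions in Theorem \ref{thm:partialDerivatives} make sense and $v_n\neq 0$); since $\sum_k \partial v_n/\partial x_k$ is a polynomial in $\vec x_n$ that vanishes on a dense set, it vanishes identically. Alternatively, and perhaps more cleanly for the paper's style, one can avoid this remark entirely by an induction mirroring the proof of Theorem \ref{thm:partialDerivatives}: using \eqref{eq:partialDerNotLast} and \eqref{eq:partialDerLast},
\[
\sum_{k=1}^n \frac{\partial v_n}{\partial x_k}
= \frac{\partial v_n}{\partial x_n} + \sum_{k=1}^{n-1}\frac{\partial v_n}{\partial x_k}
= \sum_{i=1}^{n-1}\frac{v_n}{x_n-x_i}
+ \sum_{k=1}^{n-1}\left(-\frac{v_n}{x_n-x_k} + \left[\prod_{i=1}^{n-1}(x_n-x_i)\right]\frac{\partial v_{n-1}}{\partial x_k}\right).
\]
The first sum and the first part of the second sum cancel term by term, leaving $\left[\prod_{i=1}^{n-1}(x_n-x_i)\right]\sum_{k=1}^{n-1}\partial v_{n-1}/\partial x_k$, which is zero by the induction hypothesis. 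The base case $n=2$ is immediate: $\partial v_2/\partial x_1 + \partial v_2/\partial x_2 = -1 + 1 = 0$.

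I do not expect any real obstacle here; the lemma is essentially an antisymmetry bookkeeping fact. The only thing to be careful about is deciding between the two presentations — the pairwise-cancellation argument is shorter but needs the density remark to handle coincident coordinates, whereas the induction is slightly longer but stays polynomial throughout and matches the structure of the preceding proof. I would go with the induction for consistency with the rest of the section.
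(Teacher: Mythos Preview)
Your proposal is correct, and the induction argument you ultimately select is exactly the proof given in the paper: apply \eqref{eq:partialDerNotLast} and \eqref{eq:partialDerLast}, cancel the $\pm v_n/(x_n-x_k)$ terms, factor out $\prod_{i=1}^{n-1}(x_n-x_i)$, and invoke the induction hypothesis with base case $n=2$. Your first argument via Theorem~\ref{thm:partialDerivatives} and pairwise antisymmetric cancellation is a legitimate and shorter alternative the paper does not use; as you note, it needs the density remark (or equivalently clearing denominators) to pass from distinct coordinates to all of $\mathbb{R}^n$, whereas the induction stays polynomial throughout.
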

\begin{proof}
 This lemma is easily proved using lemma \ref{lem:partialDerivatives} and induction.
 Lemma \ref{eq:partialDerivatives} gives
 \begin{align*}
  \sum_{k=1}^{n} \frac{\partial v_n}{\partial x_k} =& \sum_{k=1}^{n-1} \left( -\frac{v_n(\vec{x}_n)}{x_n-x_k} + \left[\prod_{i=1}^{n-1} (x_n-x_i)\right] \frac{\partial v_{n-1}}{\partial x_k} \right) + \sum_{i=1}^{n-1} \frac{v_n(\vec{x}_n)}{x_n-x_i} \\  
                                                   =&\left[\prod_{i=1}^{n-1} (x_n-x_i)\right] \sum_{k=1}^{n-1} \frac{\partial v_{n-1}}{\partial x_k}.
 \end{align*}
 Thus if Equation \eqref{eq:sumPartDer} is true for $n-1$ it is also true for $n$. Showing that the equation holds for $n=2$ is very simple
 \[ \frac{\partial v_2}{\partial x_1} + \frac{\partial v_2}{\partial x_2} = -1 + 1 = 0. \qedhere \]
\end{proof}
\begin{proof}[Proof of Theorem \ref{thm:hyperplane}]
 Using the method of Lagrange multipliers it follows that any $\vec{x}_n$ on the unit sphere that is an extreme point of the Vandermonde determinant will also be a stationary point for the Lagrange function
 \[\Lambda_n(\vec{x}_n,\lambda) = v(\vec{x}_n) - \lambda \left(\sum_{i=1}^{n} x_i^2 - 1\right) \] 
 for some $\lambda$. Explicitly this requirement becomes
 \begin{align}
  \frac{\partial \Lambda_n}{\partial x_k} &= 0 \text{ for all } 1 \leq k \leq n \label{eq:lagrangeStatPt}, \\
  \frac{\partial \Lambda_n}{\partial \lambda} &= 0 \label{eq:lagrangeConst}.
 \end{align}
 Equation \eqref{eq:lagrangeConst} corresponds to the restriction to the unit sphere and is therefore immediately fulfilled. Since all the partial derivatives of the Lagrange function should be equal to zero it is obvious that the sum of the partial derivatives will also be equal to zero. Combining this with lemma \ref{lem:sumPartDer} gives
 \begin{equation}
  \label{eq:lagrangeSumCond}
  \sum_{k=1}^{n} \frac{\partial \Lambda_n}{\partial x_k} = \sum_{k=1}^{n}\PAR{ \frac{\partial v_n}{\partial x_k} - 2\lambda x_k }= -2\lambda \sum_{k=1}^{n} x_k = 0.
 \end{equation}
 There are two ways to fulfill condition \eqref{eq:lagrangeSumCond} either $\lambda = 0$ or $x_1 + x_2 + \ldots\ +x_n = 0$.
When $\lambda = 0$ Equation \eqref{eq:lagrangeStatPt} reduces to
\begin{align*}
\frac{\partial v_n}{\partial x_k}=0 \text{ for all } 1 \leq k \leq n,
\end{align*}
and by Equation \eqref{eq:vn_scalingx} this can only be true if $v_n(\vec x_n)=0$, which is of no interest to us, and so
all extreme points must lie in the hyperplane $x_1+x_2+ \ldots\ +x_n = 0$.

\end{proof}

\begin{theorem}
 \label{thm:extreme_points_analytical}
 A point on the unit sphere in $\mathbb{R}^n$, $\vec{x}_n = (x_1, x_2, \ldots\, x_n)$, is an extreme point of the Vandermonde determinant if and only if all $x_i$, $i \in \{ 1,2,\ldots\,n \}$, are distinct roots of the rescaled Hermite polynomial 
 \begin{equation}
  P_n(x) = \left(2n(n-1)\right)^{-\frac{n}{2}}H_n\left(\sqrt{\frac{n(n-1)}{2}}x\right). \label{eq:extreme_point_poly}
 \end{equation}
\end{theorem}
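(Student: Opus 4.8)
The plan is to combine the Lagrange condition from Theorem~\ref{thm:hyperplane} with the explicit partial-derivative formula of Theorem~\ref{thm:partialDerivatives} to obtain a differential equation satisfied by the polynomial whose roots are the coordinates of an extreme point. First I would set up the Lagrange equations: at an extreme point on $S^{n-1}$ with $v_n(\vec x_n)\neq 0$ (so all $x_i$ are distinct), there is a $\lambda$ with $\partial v_n/\partial x_k = 2\lambda x_k$ for every $k$. Using \eqref{eq:partialDerivatives} this reads
\[
 \sum_{\substack{i=1\\ i\neq k}}^{n}\frac{1}{x_k-x_i} = \frac{2\lambda x_k}{v_n(\vec x_n)} \qutext{for all } k.
\]
Write $P(x) = \prod_{i=1}^n (x-x_i)$, a monic degree-$n$ polynomial with simple roots at the coordinates. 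The classical identities for such a polynomial give $P'(x_k) = \prod_{i\neq k}(x_k-x_i)$ and $\tfrac12 P''(x_k) = P'(x_k)\sum_{i\neq k} (x_k-x_i)^{-1}$, so the Lagrange condition becomes $\tfrac12 P''(x_k) = c\, x_k P'(x_k)$ at each root $x_k$, where $c = 2\lambda/v_n(\vec x_n)$ is a constant independent of $k$.

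Next I would promote this from "holds at the $n$ roots" to "holds identically." The polynomial $P''(x) - 2c\,x P'(x)$ has degree at most $n$ and vanishes at the $n$ distinct points $x_1,\dots,x_n$; comparing leading coefficients (the $x^n$ term of $P''$ is $0$, and of $2cxP'$ is $2cn\,x^n$, so we actually need to be careful) shows $P''(x) - 2c\,xP'(x) + 2cn\,P(x)$ has degree $\le n-1$ and still vanishes at all $x_k$, hence is identically zero. So $P$ satisfies
\[
 P''(x) - 2c\,x P'(x) + 2cn\,P(x) = 0.
\]
Then I would pin down $c$ using the constraint. Summing Vieta/Newton relations, or more directly: the coefficient of $x^{n-2}$ in the ODE gives a relation forcing $\sum x_i^2$ in terms of $c$; since $\sum x_i^2 = 1$ on the sphere (and $\sum x_i = 0$ by Theorem~\ref{thm:hyperplane}, so $e_2 = -\tfrac12$), one extracts $c = n(n-1)$. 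Substituting $c = n(n-1)$ and rescaling $x = \sqrt{2/(n(n-1))}\,y$ turns the ODE into Hermite's equation $H'' - 2yH' + 2nH = 0$, identifying $P$ with the rescaled Hermite polynomial $P_n$ of \eqref{eq:extreme_point_poly} up to the normalizing constant, which is fixed by matching leading coefficients.

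For the converse, I would observe that if $\vec x_n$ consists of the $n$ distinct real roots of $P_n$, then reversing the computation shows the Lagrange equations hold with $\lambda = \tfrac12 n(n-1)\,v_n(\vec x_n)$, and that $\vec x_n$ lies on the unit sphere: the roots of $H_n$ scaled this way satisfy $\sum x_i^2 = 1$ because $e_1 = 0$ and $e_2 = -\tfrac12$ follow from the top two coefficients of $P_n$, which one reads off from the recursion for Hermite coefficients. Since $S^{n-1}$ is compact and $v_n$ is continuous and not identically zero, $v_n$ attains a nonzero max and min, so extreme points with $v_n \neq 0$ exist; as they must be among the finitely many configurations of distinct Hermite roots (all of which are permutations of one another, giving the same $|v_n|$), every such configuration is in fact extreme. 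The main obstacle I anticipate is the bookkeeping in the step that determines $c$ and the normalizing constant — getting the rescaling factors $\sqrt{n(n-1)/2}$ and $(2n(n-1))^{-n/2}$ exactly right — rather than anything conceptually deep; everything else is a standard "polynomial vanishing at enough points is zero" argument wrapped around the Lagrange multiplier setup already established.
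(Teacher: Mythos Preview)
Your proposal is correct and follows essentially the same route as the paper: Lagrange multipliers plus \eqref{eq:partialDerivatives} give $\tfrac12 P''(x_k)=c\,x_kP'(x_k)$, this is promoted to the ODE $P''-2cxP'+2cnP=0$, and a rescaling yields the Hermite equation. One small arithmetic slip to fix in the bookkeeping you flagged: matching the $x^{n-2}$ coefficients gives $n(n-1)-2c=0$, so $c=n(n-1)/2$ rather than $n(n-1)$; the paper determines this same constant in the reverse order, first rescaling with an unknown factor $a$ and then fixing $a$ via Lemma~\ref{lem:sum_squares} (the identity $\sum z_i^2=n(n-1)/2$ for the roots of $H_n$), which is exactly your $e_1=0,\ e_2=-\tfrac12$ computation read backwards.
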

\vspace{1em}
\begin{remark}
\label{rem:n_factorial}
 Note that if $\vec{x}_n = (x_1, x_2, \ldots\, x_n)$ is an extreme points of the Vandermonde determinant then any other point whose coordinates are a permutation of the coordinates of $\vec{x}_n$ is also an extreme point. This follows since the determinant function, by definition, is alternating with respect to the columns of the matrix and the $x_i$s defines the columns of the Vandermonde matrix. Thus any permutation of the $x_i$s will give the same value for $\abs{v_n(\vec x_n)}$. Since there are $n!$ permutations there will be at least $n!$ extreme points. The roots of the polynomial \eqref{eq:extreme_point_poly} defines the set of $x_i$s fully and thus there are exactly $n!$ extreme points, $n!/2$ positive and $n!/2$ negative.
\end{remark}
\vspace{1em}
\begin{remark}
\label{rem:r_symmetric}
 All terms in $P_n(x)$ are of even order if $n$ is even and of odd order when $n$ is odd. This means that the roots of $P_n(x)$ will be symmetrical
 in that they come in pairs $x_i,-x_i$.
\end{remark}
\begin{proof}[Proof of Theorem \ref{thm:extreme_points_analytical}]
  By the method of Lagrange multipliers condition \eqref{eq:lagrangeStatPt} must be fulfilled for any extreme point. If $\vec{x}_n$ is a fixed extreme point so that
 \[ v_n(\vec{x}_n) = v_{max}, \]
 then \eqref{eq:lagrangeStatPt} can be written explicitly, using \eqref{eq:partialDerivatives}, as
 \[ \frac{\partial \Lambda_n}{\partial x_k} = \sum_{\substack{i=1 \\ i \neq k}}^{n} \frac{v_{max}}{x_k-x_i} - 2\lambda x_k = 0 \text{ for all } 1 \leq k \leq n ,\]
 or alternatively by introducing a new multiplier $\rho$ as
 \begin{equation}
 \label{eq:poly_derivation_1}
  \sum_{\substack{i=1 \\ i \neq k}}^{n} \frac{1}{x_k-x_i} = \frac{2\lambda}{v_{max}} x_k = \frac{\rho}{n} x_k
  \text{ for all } 1 \leq k \leq n.
 \end{equation}
 By forming the polynomial $f(x) = (x-x_1)(x-x_2)\cdots(x-x_n)$ with the coordinates of the extreme points as its $n$ roots and noting that
 \begin{align*}
  f'(x_k)= \sum_{j=1}^n \prod_{\substack{i=1\\i\neq j}}^n (x-x_i)\bigg|_{x=x_k} 
  &=  \prod_{\substack{i=1\\i\neq k}}^n (x_k-x_i),\\
  f''(x_k)= \sum_{l=1}^n \sum_{\substack{j=1\\j\neq l}}^n \prod_{\substack{i=1\\i\neq j\\i\neq l}}^n (x-x_i)\bigg|_{x=x_k}
  &=\sum_{\substack{j=1\\j\neq k}}^n \prod_{\substack{i=1\\i\neq j\\i\neq k}}^n (x_k-x_i)
  +\sum_{\substack{l=1\\l\neq k}}^n \prod_{\substack{i=1\\i\neq l\\i\neq k}}^n (x_k-x_i)\\
  &=2 \sum_{\substack{j=1\\j\neq k}}^n \prod_{\substack{i=1\\i\neq j\\i\neq k}}^n (x_k-x_i),
 \end{align*}
 we can rewrite \eqref{eq:poly_derivation_1} as
 \begin{align*}
  \frac{1}{2} \frac{f''(x_k)}{f'(x_k)} = \frac{\rho}{n} x_k,
 \end{align*}
or
 \begin{align*}
 f''(x_k) - \frac{2\rho}{n} x_k f'(x_k)  = 0.
 \end{align*}
 And since the last equation must vanish for all $k$ we must have
 \begin{align}
 f''(x) - \frac{2\rho}{n} x f'(x)  = c f(x),  \label{eq:poly_derivation_2}
 \end{align}
 for some constant $c$.
 To find $c$ the $x^n$-terms of the right and left part of Equation \eqref{eq:poly_derivation_2} are compared to each other,
 \[ c \cdot c_{n} x^n = -\frac{2\rho}{n} x n c_n x^{n-1} = -2\rho \cdot c_n x^n  ~\Rightarrow~ c = -2\rho . \]
 Thus the following differential equation for $f(x)$ must be satisfied
 \begin{equation}
 \label{eq:diff_eq_roots_basic} 
 f''(x) - \frac{2\rho}{n} x f'(x) + 2\rho f(x) = 0.
 \end{equation}
 Choosing $x = az$ gives
 \begin{align*}
   & f''(az) - \frac{2\rho}{(n-1)} a^2 z f'(az) + 2\rho f(az) \\
  =& \frac{1}{a^2} \frac{\mathrm{d}^2 f}{\mathrm{d}z^2}(az) - \frac{2\rho}{n} a z\frac{1}{a}\frac{\mathrm{d} f}{\mathrm{d}z}(az) + 2\rho f(az) = 0.
 \end{align*}
 By setting $g(z) = f(az)$ and choosing $a = \sqrt{\frac{n}{\rho}}$ a differential equation that matches the definition for the Hermite polynomials is found:
 \begin{equation}
 \label{eq:hermite_poly}
  g''(z) - 2 z g'(z) + 2 n g(z) = 0.
 \end{equation}
 By definition the solution to \eqref{eq:hermite_poly} is $g(z) = b H_n(z)$ where $b$ is a constant.
 An exact expression for the constant $a$ can be found using Lemma \ref{lem:sum_squares} (for the sake of convenience the lemma is stated and proved after this theorem).
 We get
 \[ \sum_{i=1}^{n} x_i^2 = \sum_{i=1}^{n} a^2 z_i^2 = 1 \Rightarrow a^2 \frac{n(n-1)}{2} = 1, \]
 and so
 \[ a = \sqrt{\frac{2}{n(n-1)}}. \]
 Thus condition \eqref{eq:lagrangeStatPt} is fulfilled when $x_i$ are the roots of
 \[ P_n(x) = bH_n\left(z\right) = bH_n\left(\sqrt{\frac{n(n-1)}{2}}x\right). \]
 Choosing $b = \left(2n(n-1)\right)^{-\frac{n}{2}}$ gives $P_n(x)$ leading coefficient $1$. This can be confirmed by calculating the leading coefficient of $P(x)$ using the explicit expression for the Hermite polynomial \eqref{eq:hermite_explicit}. This completes the proof.
\end{proof}

\begin{lemma}
 \label{lem:sum_squares}
 Let $x_i$, $i = 1,2,\ldots,n$ be roots of the Hermite polynomial $H_n(x)$. Then
 \[ \sum_{i=1}^{n} x_i^2 = \frac{n(n-1)}{2}. \]
\end{lemma}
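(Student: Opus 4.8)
The plan is to extract the claimed sum of squared roots from the coefficients of the Hermite polynomial $H_n(x)$ via Newton's identities (or directly from Vieta's formulas). Recall that if $x_1,\dots,x_n$ are the roots of a monic polynomial $x^n + c_{n-1}x^{n-1} + c_{n-2}x^{n-2} + \cdots$, then $\sum_i x_i = -c_{n-1}$ and $\sum_i x_i^2 = c_{n-1}^2 - 2c_{n-2}$. So the whole computation reduces to reading off the top two nonzero coefficients of $H_n$ (after normalizing to be monic).

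First I would recall the explicit formula for the Hermite polynomial,
\[ H_n(x) = n! \sum_{m=0}^{\lfloor n/2 \rfloor} \frac{(-1)^m}{m!\,(n-2m)!}(2x)^{n-2m}, \]
so that the leading term is $2^n x^n$ and the next term is the $m=1$ term, $-\,2^{n-2} n(n-1)\, x^{n-2}$. Dividing by $2^n$ to make it monic, $\tilde H_n(x) = x^n - \tfrac{n(n-1)}{4}x^{n-2} + \cdots$, I read off $c_{n-1}=0$ and $c_{n-2} = -\tfrac{n(n-1)}{4}$. Then
\[ \sum_{i=1}^n x_i^2 = c_{n-1}^2 - 2c_{n-2} = 0 - 2\left(-\frac{n(n-1)}{4}\right) = \frac{n(n-1)}{2}, \]
which is the claim. (Note $c_{n-1}=0$ because, as in Remark \ref{rem:r_symmetric}, $H_n$ has only even- or only odd-degree terms, so the roots are symmetric about $0$ and their sum vanishes.) A small edge case: for $n=1$ the sum is empty, $H_1(x)=2x$ has the single root $0$, and $\tfrac{1\cdot0}{2}=0$, so the formula holds trivially; for $n\ge 2$ the argument above applies verbatim.

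An alternative, if one prefers not to invoke the explicit coefficient formula, is to use the Hermite differential equation \eqref{eq:hermite_poly}, namely $H_n'' - 2xH_n' + 2nH_n = 0$: writing $H_n(x) = 2^n\prod_i(x-x_i)$ and comparing the coefficient of $x^{n-2}$ on both sides of the ODE gives a linear relation that pins down the power sum $\sum_i x_i^2$ directly — this is essentially the same bookkeeping as in the proof of Theorem \ref{thm:extreme_points_analytical}. Either route is routine; the only thing to be careful about is the normalization constant ($2^n$) when passing between $H_n$ and its monic rescaling, and the sign in Newton's identity $p_2 = e_1^2 - 2e_2$. There is no real obstacle here — the "hard part" is merely choosing the cleanest of several equivalent one-line computations.
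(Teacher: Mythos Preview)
Your proof is correct and follows essentially the same approach as the paper: both read off the top coefficients of $H_n$ from the explicit formula, normalize to monic (equivalently, identify $e_1$ and $e_2$), and apply the identity $\sum_i x_i^2 = e_1^2 - 2e_2$. The only cosmetic difference is that the paper phrases things in terms of the elementary symmetric polynomials $e_1,e_2$ rather than the monic coefficients $c_{n-1},c_{n-2}$, which amounts to the same computation.
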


\begin{proof}
 By letting $e_k(x_1,\ldots\,x_n)$ denote the elementary symmetric polynomials $H_n(x)$ can be written as
 \begin{align*} 
 H_n(x) &= A_n(x-x_1)\cdots(x-x_n) \\
        &= A_n (x^n - e_1(x_1,\ldots,x_n)x^{n-1} + e_2(x_1,\ldots,x_n)x^{n-2} + q(x))
 \end{align*}
 where $q(x)$ is a polynomial of degree $n-3$. Noting that
 \begin{align} 
 \sum_{i=1}^{n} x_i^2 &= (x_1+\ldots+x_n)^2 - 2 \sum_{1\leq i < j \leq n} x_i x_j \nonumber \\ 
                      &= e_1(x_1,\ldots,x_n)^2 - 2 e_2(x_1,\ldots,x_n) \label{eq:sum_squares},
 \end{align}
 it is clear the the sum of the square of the roots can be described using the coefficients for $x^n$, $x^{n-1}$ and $x^{n-2}$.
 The explicit expression for $H_n(x)$ is \cite{szego}
 \begin{align} 
 H_n(x) &= n! \sum_{i=0}^{\left\lfloor\frac{n}{2}\right\rfloor} \frac{(-1)^i}{i!}\frac{(2x)^{n-2i}}{(n-2i)!} \nonumber \\
        &= 2^n x^n - 2^{n-2}n(n-1) x^{n-2} + n! \sum_{i=3}^{\left\lfloor\frac{n}{2}\right\rfloor} \frac{(-1)^i}{i!}\frac{(2x)^{n-2i}}{(n-2i)!} \label{eq:hermite_explicit}.
 \end{align}
 Comparing the coefficients in the two expressions for $H_n(x)$ gives
 \begin{align*}
  A_n &= 2^n, \\
  A_n e_1(x_1,\ldots,x_n) &= 0, \\
  A_n e_2(x_1,\ldots,x_n) &= -n(n-1) 2^{n-2}.
 \end{align*}
 Thus by \eqref{eq:sum_squares}
 \[ \sum_{i=1}^{n} x_i^2 = \frac{n(n-1)}{2}. \]
\end{proof}

\begin{theorem} \label{thm:coefficients}
 The coefficients, $a_k$, for the term $x^k$ in $P_n(x)$ given by \eqref{eq:extreme_point_poly} is given by the following relations
 \begin{align}
  a_n &= 1,~~a_{n-1} = 0,~~a_{n-2} = \frac{1}{2}, \nonumber \\
  a_k &= -\frac{(k+1)(k+2)}{n(n-1)(n-k)}a_{k+2},~~1 \leq k \leq n-3. \label{eq:poly_coeff_rec}
 \end{align}
\end{theorem}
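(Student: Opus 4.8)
The plan is to exploit the fact, already implicit in the proof of Theorem~\ref{thm:extreme_points_analytical}, that the monic polynomial $f=P_n$ satisfies a second-order linear ODE with polynomial coefficients. That proof shows $f$ obeys \eqref{eq:diff_eq_roots_basic} with $\rho = n/a^2 = n^2(n-1)/2$ (using $a=\sqrt{2/(n(n-1))}$), so that $2\rho/n = n(n-1)$ and $2\rho = n^2(n-1)$, giving
\begin{equation*}
 P_n''(x) - n(n-1)\,x\,P_n'(x) + n^2(n-1)\,P_n(x) = 0.
\end{equation*}
Equivalently one obtains this by rescaling the Hermite equation \eqref{eq:hermite_poly} through $z=\sqrt{n(n-1)/2}\,x$. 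So the first step is just to record this differential equation cleanly.

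Next I would write $P_n(x)=\sum_{k=0}^{n}a_k x^k$ and substitute into the ODE. Using $P_n'(x)=\sum_{k\ge 0}(k+1)a_{k+1}x^k$ and $P_n''(x)=\sum_{k\ge 0}(k+2)(k+1)a_{k+2}x^k$, collecting the coefficient of $x^k$ yields
\begin{equation*}
 (k+2)(k+1)\,a_{k+2} - n(n-1)\,k\,a_k + n^2(n-1)\,a_k = 0,
\end{equation*}
i.e. $(k+2)(k+1)a_{k+2} + n(n-1)(n-k)a_k = 0$, which rearranges to exactly \eqref{eq:poly_coeff_rec} in the range $1\le k\le n-3$, where the factor $n-k$ is nonzero and $a_{k+2}$ is a genuine coefficient of $P_n$. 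This coefficient comparison is the heart of the argument and is otherwise routine.

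Finally, the three top coefficients must be pinned down separately, since the recurrence only relates $a_k$ to $a_{k+2}$ and cannot be run past $k=n$ (the denominator $n-k$ vanishes there). Here $a_n=1$ is the monic normalization established in Theorem~\ref{thm:extreme_points_analytical}; $a_{n-1}=0$ follows either from the $x^{n-1}$-coefficient of the ODE above or from the parity observation in Remark~\ref{rem:r_symmetric} (all terms of $P_n$ have the parity of $n$); and $a_{n-2}$ is read off from the $x^{n-2}$-term of the explicit Hermite expansion \eqref{eq:hermite_explicit} after carrying through the rescaling $x\mapsto\sqrt{n(n-1)/2}\,x$ and the factor $b=(2n(n-1))^{-n/2}$. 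I do not expect a conceptual obstacle; the main care needed is bookkeeping — keeping the rescaling constants straight when transferring \eqref{eq:hermite_explicit} to $P_n$, and making sure the index range quoted in \eqref{eq:poly_coeff_rec} is precisely the range in which the coefficient comparison is valid.
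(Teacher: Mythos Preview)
Your approach is essentially the paper's: both substitute $P_n=\sum a_k x^k$ into the ODE \eqref{eq:diff_eq_roots_basic} and read off the two-term recursion from the coefficient of $x^k$; the only procedural difference is that you fix $\rho=n^2(n-1)/2$ up front from the rescaling $a=\sqrt{2/(n(n-1))}$, whereas the paper recovers $1/(2\rho)$ a posteriori by matching the $x^{n-2}$ term against the already-known value of $a_{n-2}$. One bookkeeping caution: when you actually carry the rescaling of \eqref{eq:hermite_explicit} through (or, equivalently, evaluate your recursion at $k=n-2$) you will obtain $a_{n-2}=-\tfrac12$, consistent with the explicit polynomials $P_3,\dots,P_7$ displayed later in the paper, so the $+\tfrac12$ in the stated theorem is a sign typo rather than something your argument should reproduce.
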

\begin{proof}
 Equation \eqref{eq:diff_eq_roots_basic} tells us that
 \begin{equation} 
  \label{eq:pn_diff_eq}
  P_n(x)=\frac{1}{2\rho}P_n''(x)-\frac{1}{n}xP_n'(x).
 \end{equation}
 That $a_n = 1$ follows from the definition of $P_n$ and $a_{n-1} = 0$ follows from the Hermite polynomials only having terms of odd powers when $n$ is odd and even powers when $n$ is even. That $a_{n-2} = \frac{1}{2}$ can be easily shown using the definition of $P_n$ and the explicit formula for the Hermite polynomials \eqref{eq:hermite_explicit}.\\
 The value of the $\rho$ can be found by comparing the $x^{n-2}$ terms in \eqref{eq:pn_diff_eq}
 \[ a_{n-2} = \frac{1}{2\rho} n(n-1) a_{n}+\frac{1}{n}(n-2)a_{n-2}. \]
 From this follows
 \[ \frac{1}{2 \rho} = \frac{-1}{n^2(n-1)}. \]
 Comparing the $x^{n-l}$ terms in \eqref{eq:pn_diff_eq} gives the following relation
 \[ a_{n-l} = \frac{1}{2\rho} (n-l+2)(n-l) a_{n-l+2} + (n-l)a_{a-l}\frac{1}{n} \]
 which is equivalent to
 \[ a_{n-l} = a_{n-l+2} \frac{-(n-l+2)(n-l+1)}{l n^2(n-1)}. \]
 Letting $k = n-l$ gives \eqref{eq:poly_coeff_rec}. 
\end{proof}


\subsection{Extreme points of the Vandermonde determinant on the three dimensional unit sphere} \label{sec:analytical_v3}

It is fairly simple to describe $v_3(\vec{x}_3)$ on the circle, $S^2$, that is formed by the intersection of the unit sphere and the plane $x_1+x_2+x_3=0$. Using Rodrigues' rotation formula to rotate a point, $\vec{x}$, around the axis $\frac{1}{\sqrt{3}}(1,1,1)$ with the angle $\theta$ will give the rotation matrix
\[ \setlength{\arraycolsep}{1.5pt}
   R_\theta = \frac{1}{3}\begin{bmatrix} 
                   2\cos (\theta) + 1 & 1 - \cos (\theta) - \sqrt{3}\sin(\theta) & 1 - \cos (\theta) + \sqrt{3}\sin(\theta) \\
                   1 - \cos (\theta) + \sqrt{3}\sin(\theta) & 2\cos (\theta) + 1 & 1 - \cos (\theta) - \sqrt{3}\sin(\theta) \\ 
                   1 - \cos (\theta) - \sqrt{3}\sin(\theta) & 1 - \cos (\theta) + \sqrt{3}\sin(\theta) & 2\cos (\theta) + 1 
                  \end{bmatrix}\hspace{-3pt}. \]
                  
A point which already lies on $S^2$ can then be rotated to any other point on $S^2$ by letting $R_\theta$ act on the point. Choosing the point $\vec{x} = \frac{1}{\sqrt{2}}\left(-1,0,1\right)$ gives the Vandermonde determinant a convenient form on the circle since:
\[ R_\theta \vec{x} = \frac{1}{\sqrt{6}}\begin{bmatrix}
                                          -\sqrt{3}\cos(\theta)-\sin(\theta) \\
                                          -2\sin(\theta) \\
                                          \sqrt{3}\cos(\theta)+\sin(\theta)
                                         \end{bmatrix}, \]                
which gives
\begin{align*} 
 v_3(R_\theta \vec{x}) = 2&\left(\sqrt{3}\cos(\theta)+\sin(\theta)\right) \\
                           &\left(\sqrt{3}\cos(\theta)+\sin(\theta)+2\sin(\theta)\right) \\
                           &\left(-2\sin(\theta)+\sqrt{3}\cos(\theta)+\sin(\theta)\right) \\
                        =  &\frac{1}{\sqrt{2}}\left(4\cos(\theta)^3-3\cos(\theta)\right) \\
                        =  &\frac{1}{\sqrt{2}}\cos(3\theta).
\end{align*}
Note that the final equality follows from $\cos(n\theta) = T_n(\cos(\theta))$ where $T_n$ is the $n$th Chebyshev polynomial of the first kind. From formula \eqref{eq:extreme_point_poly} if follows that $P_3(x) = T_3(x)$ but for higher dimensions the relationship between the Chebyshev polynomials and $P_n$ is not as simple.\\
Finding the maximum points for $v_3(\vec{x}_3)$ on this form is simple. The Vandermonde determinant will be maximal when $3\theta = 2n\pi$ where $n$ is some integer. This gives three local maximums corresponding to $\theta_1 = 0$, $\theta_2 = \frac{2\pi}{3}$ and $\theta_3 = \frac{4\pi}{3}$. These points correspond to cyclic permutation of the coordinates of $\vec{x} = \frac{1}{\sqrt{2}}\left(-1,0,1\right)$. Analogously the minimas for $v_3(\vec{x}_3)$ can be shown to be a transposition followed by cyclic permutation of the coordinates of $\vec{x}$. Thus any permutation of the coordinates of $\vec{x}$ correspond to a local extreme point just like it was stated on page \pageref{rem:permutations3D}.

\subsection{A transformation of the optimization problem} \label{sec:transformation}

\begin{lemma} \label{lem:vndotgradient}
For any $n\geq 2$ the dot product between the gradient of $v_n(\vec{x}_n)$ and $\vec x_n$ is proportional to $v_n(\vec{x}_n)$.
More precisely,
\begin{align}
 \nabla v_n^T \vec{x}_n
 =
 \sum_{k=1}^n x_k\frac{\partial v_n}{\partial x_k}
 =
 \frac{n(n-1)}{2}v_n(\vec{x}_n).
\end{align}
\end{lemma}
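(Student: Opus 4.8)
I see two natural routes, and I would present the shorter one as the main argument while mentioning the other as a remark.

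The quickest approach is to invoke the homogeneity already recorded in \eqref{eq:vn_scalingx}. That identity says $v_n(c\vec x_n) = c^{n(n-1)/2} v_n(\vec x_n)$ for every scalar $c$, i.e.\ $v_n$ is positively homogeneous of degree $n(n-1)/2$. Differentiating both sides with respect to $c$ via the chain rule gives $\sum_{k=1}^n x_k \frac{\partial v_n}{\partial x_k}(c\vec x_n) = \frac{n(n-1)}{2} c^{n(n-1)/2 - 1} v_n(\vec x_n)$, and setting $c = 1$ yields exactly the claimed formula. This is just Euler's identity for homogeneous functions, so the only thing to check carefully is that differentiating the left-hand side under the substitution $c\vec x_n$ is legitimate, which is immediate since $v_n$ is a polynomial.

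The second, more self-contained route uses Theorem \ref{thm:partialDerivatives} directly. Substituting \eqref{eq:partialDerivatives} gives
\[
 \sum_{k=1}^n x_k\frac{\partial v_n}{\partial x_k}
 = v_n(\vec x_n)\sum_{k=1}^n\sum_{\substack{i=1\\ i\neq k}}^n \frac{x_k}{x_k-x_i}.
\]
The double sum runs over all ordered pairs $(k,i)$ with $k\neq i$; grouping the pair $(k,i)$ with $(i,k)$ gives $\frac{x_k}{x_k-x_i} + \frac{x_i}{x_i-x_k} = \frac{x_k - x_i}{x_k-x_i} = 1$, so the double sum collapses to the number of unordered pairs, namely $\binom{n}{2} = \frac{n(n-1)}{2}$. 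Here one should note that the individual terms are only defined when the $x_i$ are distinct, but the final identity is polynomial on both sides, so it extends to all of $\R^n$ by continuity (or by the fact that $\{\vec x_n : x_i \text{ distinct}\}$ is dense).

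There is no real obstacle in either argument; the only subtlety worth a sentence is the handling of coincident coordinates in the second approach, which I would dispatch with the density remark above. I would write up the homogeneity argument as the proof and append the pairing computation as an alternative, since it is the one that meshes with the partial-derivative machinery developed earlier in the section.
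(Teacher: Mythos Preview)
Your proposal is correct, and your second route---substituting Theorem~\ref{thm:partialDerivatives} and pairing $(k,i)$ with $(i,k)$---is exactly the argument the paper gives, line for line. The paper does not include your density remark about coincident coordinates, but that is a minor presentational point rather than a mathematical difference.

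Your preferred first route, via Euler's identity for homogeneous functions applied to \eqref{eq:vn_scalingx}, is genuinely different from the paper's proof. It is shorter and avoids Theorem~\ref{thm:partialDerivatives} altogether, trading the explicit partial-derivative formula for a one-line differentiation of the scaling identity. The paper's approach, by contrast, stays within the machinery developed in Section~\ref{sec:roots_by_pol} and so fits the surrounding narrative more tightly; it also makes the combinatorial source of the factor $\binom{n}{2}$ visible as a count of unordered pairs rather than as a homogeneity degree. Either argument is fully adequate here, and presenting the Euler route as primary with the pairing computation as a remark is a reasonable expository choice.
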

\begin{proof}
 Using Theorem \ref{thm:partialDerivatives} we have
\begin{align*}
  \sum_{k=1}^n x_k\frac{\partial v_n}{\partial x_k}
 =\sum_{k=1}^n x_k\sum_{i\neq k} \frac{v_n(\vec{x}_n)}{x_k-x_i}
 =v_n(\vec{x}_n) \sum_{k=1}^n \sum_{i\neq k} \frac{x_k}{x_k-x_i}.
\end{align*}
Now, for each distinct pair of indices $k=a,i=b$ in the last double sum we have that the indices $k=b,i=a$ also appear.
And so we continue
\begin{align*}
 \sum_{k=1}^n x_k\frac{\partial v_n}{\partial x_k}& = v_n(\vec{x}_n) \sum_{1\leq k<i\leq n}^n\PAR{
 \frac{x_k}{x_k-x_i}
 +
 \frac{x_i}{x_i-x_k}
 }
 \\
 &= v_n(\vec{x}_n) \sum_{1\leq k<i\leq n}^n 1 = \frac{n(n-1)}{2} v_n(\vec{x}_n),
\end{align*}
which proves the lemma.
\end{proof}

Consider the premise that an objective function $f(\vec x)$ is optimized on the points satisfying an equality constraint $g(\vec x)=0$ when its gradient is linearly dependent with the gradient of the constraint function.
In Lagrange multipliers this is expressed as
\begin{align*}
\nabla f(\vec x)=\lambda \nabla g(\vec x),
\end{align*}
where $\lambda$ is some scalar constant.
We can also express this using a dot product
\begin{align*}
\nabla f(\vec x)\cdot \nabla g(\vec x)=|\nabla f(\vec x)||\nabla g(\vec x)|\cos \theta.
\end{align*}
We are interested in the case where both $\nabla f$ and $\nabla g$ are non-zero
and so for linear dependence we require $\cos \theta=\pm 1$.
By squaring we then have
\begin{align*}
\left( \nabla f(\vec x)\cdot \nabla g(\vec x) \right)^2 =
\left(\nabla f(\vec x)\cdot \nabla f(\vec x)\right)
\left(\nabla g(\vec x)\cdot \nabla g(\vec x)\right),
\end{align*}
which can also be expressed
\begin{align} \label{eq:lineardependence}
\left( \sum_{i=1}^n \frac{\partial f}{\partial x_i} \frac{\partial g}{\partial
x_i} \right)^2=
\left( \sum_{i=1}^n \left( \frac{\partial f}{\partial x_i} \right)^2 \right)
\left( \sum_{i=1}^n \left( \frac{\partial g}{\partial x_i} \right)^2 \right).
\end{align}

\begin{theorem} \label{thm:transformation}
The problem of finding the vectors $\vec x_n$ that maximize the absolute value of the Vandermonde determinant over the unit sphere:
\begin{align}
\begin{cases}
\displaystyle\max_{\vec x_n} \prod_{i<j} | x_j-x_i |,\\
\displaystyle\text{s.t. } \sum_{i=1}^{n} x_i^2=1,
\end{cases},
\end{align}
has exactly the same solution set as the related problem
\begin{align}
\begin{cases} \label{eq:equi}
\displaystyle
\sum_{i<j} \frac{1}{(x_j-x_i)^2} = \frac 1 2 \left(  \frac{n(n-1)}{2} 
\right)^2,\\
\displaystyle
\sum_{i=1}^{n} x_i^2=1.
\end{cases}
\end{align}
\end{theorem}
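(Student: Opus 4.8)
The plan is to show that the two optimization problems have the same solution set by exploiting the scaling identity \eqref{eq:vn_scalingx} together with Lemma \ref{lem:vndotgradient} and the general linear-dependence reformulation \eqref{eq:lineardependence} of the Lagrange condition. Take $f(\vec x_n) = v_n(\vec x_n)$ (working with the determinant itself; maximizing $|v_n|$ is the same as finding the critical points where $|v_n|$ attains its extreme value) and $g(\vec x_n) = \sum_i x_i^2 - 1$, so $\nabla g = 2\vec x_n$ and $\sum_i (\partial g/\partial x_i)^2 = 4\sum_i x_i^2 = 4$ on the sphere. By Lemma \ref{lem:vndotgradient} we have $\nabla v_n^T \vec x_n = \tfrac{n(n-1)}{2} v_n$, hence $\nabla f \cdot \nabla g = n(n-1)\,v_n$. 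Also, by Theorem \ref{thm:partialDerivatives}, $\partial v_n/\partial x_k = v_n \sum_{i\neq k} 1/(x_k-x_i)$, so I will need an expression for $\sum_k (\partial v_n/\partial x_k)^2 = v_n^2 \sum_k \big(\sum_{i\neq k} 1/(x_k-x_i)\big)^2$.

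Next I would substitute these three quantities into \eqref{eq:lineardependence}. The left-hand side becomes $(n(n-1) v_n)^2 = n^2(n-1)^2 v_n^2$, and the right-hand side becomes $\big(v_n^2 \sum_k (\sum_{i\neq k} 1/(x_k-x_i))^2\big)\cdot 4$. Assuming $v_n \neq 0$ (which holds at any genuine maximum of $|v_n|$, since $|v_n|$ is not identically zero on the sphere), we may cancel $v_n^2$ and obtain
\[
 \sum_{k=1}^n \Big( \sum_{i\neq k} \frac{1}{x_k-x_i} \Big)^2 = \frac{n^2(n-1)^2}{4} = \Big(\frac{n(n-1)}{2}\Big)^2 .
\]
So the maximizers are exactly the points on the sphere with $v_n \neq 0$ satisfying this identity. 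The remaining task is purely algebraic: show that on the sphere (equivalently, using $\sum x_i^2 = 1$), the quantity $\sum_k \big(\sum_{i\neq k} 1/(x_k-x_i)\big)^2$ equals $2\sum_{i<j} 1/(x_j-x_i)^2$ plus a term that collapses via the constraint, so that the displayed identity is equivalent to $\sum_{i<j} 1/(x_j-x_i)^2 = \tfrac12\big(\tfrac{n(n-1)}{2}\big)^2$.

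The heart of the argument — and the step I expect to be the main obstacle — is that algebraic identity. Expanding the square gives
\[
 \sum_k \Big( \sum_{i\neq k} \frac{1}{x_k-x_i} \Big)^2
 = \sum_k \sum_{i\neq k} \frac{1}{(x_k-x_i)^2}
 + \sum_k \sum_{\substack{i\neq k,\, j\neq k \\ i\neq j}} \frac{1}{(x_k-x_i)(x_k-x_j)}.
\]
The first double sum is exactly $2\sum_{i<j} 1/(x_j-x_i)^2$. For the second (the genuine cross terms, over ordered triples of distinct indices), I would use the classical partial-fraction symmetrization: summing $\tfrac{1}{(x_k-x_i)(x_k-x_j)}$ over the three cyclic roles of a fixed unordered triple $\{i,j,k\}$ yields $0$ (this is the identity $\sum_{\text{cyc}} \tfrac{1}{(a-b)(a-c)} = 0$). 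Hence the entire cross-term sum vanishes identically, with no use of the sphere constraint at all, and we get cleanly $\sum_k \big(\sum_{i\neq k} 1/(x_k-x_i)\big)^2 = 2\sum_{i<j} 1/(x_j-x_i)^2$. Combining this with the displayed consequence of \eqref{eq:lineardependence} gives precisely the first equation of \eqref{eq:equi}, and the second equation is the shared sphere constraint. Finally I would note the converse direction: any point satisfying \eqref{eq:equi} has $\nabla f$ and $\nabla g$ linearly dependent and lies on the sphere, hence is a Lagrange critical point of $v_n$ on the sphere with $v_n \neq 0$; by Theorem \ref{thm:extreme_points_analytical} (or directly, since all such critical points give $|v_n| = v_{\max}$ by the permutation symmetry of Remark \ref{rem:n_factorial}) these are exactly the maximizers, so the two solution sets coincide.
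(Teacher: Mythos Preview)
Your proposal is correct and follows essentially the same route as the paper: apply the linear-dependence criterion \eqref{eq:lineardependence} with $f=v_n$ and $g=\sum x_i^2$, use Lemma~\ref{lem:vndotgradient} on the left, expand $\sum_k(\partial v_n/\partial x_k)^2$ via Theorem~\ref{thm:partialDerivatives} on the right, and kill the cross terms with the cyclic identity $\sum_{\text{cyc}}1/((a-b)(a-c))=0$. Your treatment of the converse direction (invoking Theorem~\ref{thm:extreme_points_analytical} and Remark~\ref{rem:n_factorial} to argue that every nonzero Lagrange-critical point is in fact a maximizer) is actually more explicit than the paper's, which simply asserts that \eqref{eq:fin} ``captures the linear dependence requirement.''
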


\begin{proof}

By applying Equation \ref{eq:lineardependence} to the problem of optimizing the Vandermonde determinant $v_n$ over the unit
sphere we get.
\begin{align}
\left( \sum_{i=1}^n \frac{\partial v_n}{\partial x_i} \frac{\partial
\sum x_i^2}{\partial x_i} \right)^2 &=
\left( \sum_{i=1}^n \left( \frac{\partial v_n}{\partial x_i} \right)^2 \right)
\left( \sum_{i=1}^n \left( \frac{\partial \sum x_i^2}{\partial x_i} \right)^2
\right),\nonumber\\
\left( \sum_{i=1}^n 2x_i\frac{\partial v_n}{\partial x_i} \right)^2&=
\left( \sum_{i=1}^n \left( \frac{\partial v_n}{\partial x_i} \right)^2 \right)
\left( \sum_{i=1}^n \left( 2x_i \right)^2
\right),\nonumber\\
\left(
	\sum_{i=1}^n x_i\frac{\partial v_n}{\partial x_i}
\right)^2
&=
\sum_{i=1}^n \left( \frac{\partial v_n}{\partial x_i} \right)^2. \label{eq:withderivatives}
\end{align}

By applying Lemma \ref{lem:vndotgradient} the left part of Equation (\ref{eq:withderivatives}) can be written
\begin{align*}
v_n(\vec{x}_n)^2\left(
 \frac{n(n-1)}{2}
\right)^2.
\end{align*}

The right part of Equation (\ref{eq:withderivatives}) can be rewritten as
\begin{align*}
\sum_{i=1}^n \left( \frac{\partial v_n}{\partial x_i} \right)^2
=
\sum_{i=1}^n \left( \sum_{\substack{k=1\\k\neq i}}^n \frac{v_n(\vec{x}_n)}{x_i-x_k} \right)^2
=
v_n(\vec{x}_n)^2
\sum_{i=1}^n \left( \sum_{\substack{k=1\\k\neq i}}^n \frac{1}{x_i-x_k}
\right)^2,
\end{align*}
and by expanding the square we continue
\begin{align*}
\sum_{i=1}^n \left( \frac{\partial v_n}{\partial x_i} \right)^2 =
v_n(\vec{x}_n)^2
\sum_{i=1}^n \left(
\sum_{\substack{k=1\\k\neq i}}^n \frac{1}{(x_i-x_k)^2}
+
\sum_{\substack{k=1\\k\neq i}}^n
\sum_{\substack{j=1\\j\neq i\\j\neq k}}^n
\frac{1}{(x_i-x_k)} \frac{1}{(x_i-x_j)}
\right)
\\
=
v_n(\vec{x}_n)^2
\sum_{\substack{k\neq i}} \frac{1}{(x_i-x_k)^2}
+
v_n(\vec{x}_n)^2
\sum_{i=1}^n
\sum_{\substack{k=1\\k\neq i}}^n
\sum_{\substack{j=1\\j\neq i\\j\neq k}}^n
\frac{1}{(x_i-x_k)} \frac{1}{(x_i-x_j)}.
\end{align*}
We recognize that the triple sum runs over all distinct $i,k,j$ and so we
can write them as one sum by expanding permutations:
\begin{align*}
\sum_{i=1}^n \left( \frac{\partial v_n}{\partial x_i} \right)^2 =&
~v_n(\vec{x}_n)^2
\sum_{\substack{k\neq i}} \frac{1}{(x_i-x_k)^2}
\\
&+
v_n(\vec{x}_n)^2
\sum_{i<j<k}
\left(\frac{1}{(x_i-x_k)(x_i-x_j)}
+\frac{1}{(x_i-x_j)(x_i-x_k)}\right.
\\
&~\hspace{2.9cm}+\frac{1}{(x_j-x_k)(x_j-x_i)}
+\frac{1}{(x_j-x_i)(x_j-x_k)}
\\
&\left.~\hspace{2.8cm}+\frac{1}{(x_k-x_i)(x_k-x_j)}
+\frac{1}{(x_k-x_j)(x_k-x_i)}
\right)
\end{align*}
\begin{align*}
=&
~v_n(\vec{x}_n)^2
\sum_{\substack{k\neq i}} \frac{1}{(x_i-x_k)^2}
+
2v_n(\vec{x}_n)^2
\sum_{i<j<k}
\frac{(x_k-x_j)+(x_i-x_k)+(x_j-x_i)}{(x_i-x_j)(x_j-x_k)(x_k-x_i)}
\\
=&
~v_n(\vec{x}_n)^2
\sum_{\substack{k\neq i}} \frac{1}{(x_i-x_k)^2}
=
2v_n(\vec{x}_n)^2
\sum_{i<j} \frac{1}{(x_j-x_i)^2}.
\end{align*}

We continue by joining the simplified left and right part of equation
(\ref{eq:withderivatives}).
\begin{align*}
v_n(\vec{x}_n)^2\left(
 \frac{n(n-1)}{2}
\right)^2
=
2v_n(\vec{x}_n)^2
\sum_{i<j} \frac{1}{(x_j-x_i)^2},
\end{align*}
and the result follows as
\begin{align} \label{eq:fin}
\sum_{i<j} \frac{1}{(x_j-x_i)^2}
=
\frac 1 2 \left(
 \frac{n(n-1)}{2}
\right)^2.
\end{align}
This captures the linear dependence requirement of the problem, what remains is to require the solutions to lie on the unit sphere.
\begin{equation*}
\sum_{i=1}^n x_i^2=1. \qedhere
\end{equation*}

\end{proof}


\subsection{Further visual exploration} \label{sec:visualnd}

Visualization of the determinant $v_3(\vec x_3)$ on the unit sphere is straightforward, as well as visualizations for $g_3(\vec x_3,\vec a)$ for different $\vec a$.
All points on the sphere can be viewed directly by a contour map.
In higher dimensions we need to reduce the set of visualized points somehow.
In this section we provide visualizations for $v_4,\cdots,v_7$ by using symmetric properties of the Vandermonde determinant.

\subsubsection{Four dimensions}

By Theorem \ref{thm:hyperplane} we know that the extreme points of $v_4(\vec{x}_4)$ on the sphere all lie in the hyperplane $x_1+x_2+x_3+x_4=0$. This hyperplane forms a three-dimensional unit sphere and can then be easily visualized.

This can be realized using the transformation
\begin{align} \label{eq:xt_transform4}
 \vec x=\MAT{
 -1 & -1 &  0 \\
 -1 &  1 &  0 \\
  1 &  0 & -1 \\
  1 &  0 &  1
 }
 \MAT{
  1/\sqrt{4} & 0 & 0  \\
  0 & 1/\sqrt{2} & 0  \\
  0 & 0 & 1/\sqrt{2}  \\
 }
 \vec t.
\end{align}


\TWOFIG{g4_allplots}{Plot of $v_4(\vec{x}_4)$ over points on the unit sphere.}
                    {Plot with $\vec{t}$-basis given by \eqref{eq:xt_transform4}.}
                    {Plot with $\theta$ and $\phi$ given by \eqref{eq:t_spherical}.}


The results of plotting the $v_4(\vec{x}_4)$ after performing this transformation can be seen in Figure \ref{fig:g4_allplots}. All $24 = 4!$ extreme points are clearly visible.

From Figure \ref{fig:g4_allplots} we see that whenever we have a local maxima we have a local maxima at the opposite side of the sphere as well, and the same for minima.
This it due to the occurrence of the exponents in the rows of $V_n$.
From Equation \eqref{eq:vn_scalingx} we have
\begin{align*}
 v_n((-1)\vec x_n)=(-1)^{\displaystyle\frac{n(n-1)}{2}} v_n(\vec x_n),
\end{align*}
and so opposite points are both maxima or both minima if $n=4k$ or $n=4k+1$ for some $k \in \mathbb{Z}^+$ and opposite points are of different types if $n=4k-2$ or $n=4k-1$ for some $k \in \mathbb{Z}^+$.

By Theorem \ref{thm:extreme_points_analytical} the extreme points on the unit sphere for $v_4(\vec{x}_4)$ is described by the roots of this polynomial
\[ P_4(x) = x^4	-\frac{1}{2} x^2 + \frac{1}{48}. \]
The roots of $P_4(x)$ are:
\begin{align*}
x_{41} =-\frac{1}{2} \sqrt{1+\sqrt{\frac{2}{3}}},&~~x_{42} =-\frac{1}{2} \sqrt{1-\sqrt{\frac{2}{3}}},\\
x_{43} =\frac{1}{2} \sqrt{1-\sqrt{\frac{2}{3}}},&~~x_{44} =\frac{1}{2} \sqrt{1+\sqrt{\frac{2}{3}}}.
\end{align*}

\subsubsection{Five dimensions}

By Theorem \ref{thm:extreme_points_analytical} or \ref{thm:coefficients} we see that the polynomials providing the coordinates of the extreme points have all even or all odd powers.
From this it is easy to see that all coordinates of the extreme points must come in pairs $x_i,-x_i$.
Furthermore, by Theorem \ref{thm:hyperplane} we know that the extreme points of $v_5(\vec{x}_5)$ on the sphere all lie in the hyperplane $x_1+x_2+x_3+x_4+x_ 5=0$.

We use this to visualize $v_5(\vec{x}_5)$ by selecting a subspace of $\R^5$ that contains all symmetrical points $(x_1,x_2,0,-x_2,-x_1)$ on the sphere.

The coordinates in Figure \ref{fig:g5_012plots_a} are related to $\vec{x}_5$ by
\begin{align} \label{eq:xt_transform5}
  \vec x_5=
  \MAT{-1 & 0 & 1\\0&-1&1\\0&0&1\\0&1&1\\1&0&1}
  \MAT{1/\sqrt{2}&0&0\\0&1/\sqrt{2}&0\\0&0&1/\sqrt{5}}
  \vec t .
\end{align}

\TWOFIG{g5_012plots}{Plot of $v_5(\vec{x}_5)$ over points on the unit sphere.}
                    {Plot with $\vec{t}$-basis given by \eqref{eq:xt_transform5}.}
                    {Plot with $\theta$ and $\phi$ given by \eqref{eq:t_spherical}.}

The result, see Figure \ref{fig:g5_012plots}, is a visualization of a subspace containing 8 of the 120 extreme points. Note that to fulfill the condition that the coordinates should be symmetrically distributed pairs can be fulfilled in two other subspaces with points that can be describes in the following ways: $(x_1,x_2,0,-x_1,-x_2)$ and $(x_2,-x_2,0,x_1,-x_1)$. This means that a transformation similar to \eqref{eq:xt_transform5} can be used to describe $3 \cdot 8 = 24$ different extreme points.

The transformation \eqref{eq:xt_transform5} corresponds to choosing $x_3 = 0$. Choosing another coordinate to be zero will give a different subspace of $\mathbb{R}^5$ which behaves identically to the visualized one. This multiplies the number of extreme points by five to the expected $5 \cdot 4! = 120$.
\\                 
By theorem \ref{thm:extreme_points_analytical} the extreme points on the unit sphere for $v_5(\vec{x}_5)$ is described by the roots of this polynomial
\[ P_5(x)=x^5 - \frac{1}{2} x^3 + \frac{3}{80} x. \]
The roots of $P_5(x)$ are:
\begin{align*}
x_{51} &=-x_{55},~~x_{52} = -x_{54},~~x_{53} = 0, \\
x_{54} &=\frac{1}{2} \sqrt{1-\sqrt{\frac{2}{5}}}, ~~ x_{55} =\frac{1}{2} \sqrt{1+\sqrt{\frac{2}{5}}}.\\
\end{align*}

\subsubsection{Six dimensions}

As for $v_5(\vec{x}_5)$ we use symmetry to visualize $v_6(\vec{x}_6)$.
We select a subspace of $\R^6$ that contains all symmetrical points $(x_1,x_2,x_3,-x_3,-x_2,-x_1)$ on the sphere.

The coordinates in Figure \ref{fig:g6_012plots_a} are related to $\vec{x}_6$ by
\begin{align} \label{eq:xt_transform6}
  \vec x_6=
  \MAT{
  -1 & 0 & 0\\
  0&-1&0\\
  0&0&-1\\
  0&0&1\\
  0&1&0\\
  1&0&0
  }
  \MAT{1/\sqrt{2}&0&0\\0&1/\sqrt{2}&0\\0&0&1/\sqrt{2}}
  \vec t.
\end{align}

\TWOFIG{g6_012plots}{Plot of $v_6(\vec{x}_6)$ over points on the unit sphere.}
                    {Plot with $\vec{t}$-basis given by \eqref{eq:xt_transform6}.}
                    {Plot with $\theta$ and $\phi$ given by \eqref{eq:t_spherical}.}

In Figure \ref{fig:g6_012plots} there are $48$ visible extreme points. The remaining extreme points can be found using arguments analogous the five-dimensional case.

By Theorem \ref{thm:extreme_points_analytical} the extreme points on the unit sphere for $v_6(\vec{x}_6)$ is described by the roots of this polynomial
\[ P_6(x) = x^6	- \frac{1}{2} x^4 + \frac{1}{20} x^2 - \frac{1}{1800}. \]

The roots of $P_6(x)$ are:
\begin{align}
 x_{61} =& -x_{66},~~x_{62} = -x_{65},~~x_{63} = -x_{64}, \nonumber \\
 x_{64} =& \frac{(-1)^{\frac{3}{4}}}{2\sqrt{15}}\left(10 i -\sqrt[3]{10} \left(z_6 w_6^\frac{1}{3}+ \overline{z}_6 \overline{w}_6^\frac{1}{3}\right)\right)^\frac{1}{2} \nonumber \\
        =& \frac{1}{2\sqrt{15}}\sqrt{10 - 2 \sqrt{10} \left(\sqrt{3} l_6 - k_6\right)}, \\
 x_{65} =& \frac{(-1)^{\frac{1}{4}}}{2\sqrt{15}}\left(-10 i -\sqrt[3]{10} \left(\overline{z}_6 w_6^\frac{1}{3}+z_6\overline{w}_6^\frac{1}{3}\right)\right)^\frac{1}{2} \nonumber \\
        =& \frac{1}{2\sqrt{15}}\sqrt{10 - 2 \sqrt{10} \left(\sqrt{3} l_6 + k_6\right)}, \\
 x_{66} =& \left(\frac{1}{30} \left(\sqrt[3]{10}\left(w_6^\frac{1}{3}+ \overline{w}_6^\frac{1}{3}\right)+5\right)\right)^\frac{1}{2} \nonumber \\
        =& \sqrt{\frac{1}{30} \left(2\sqrt{10}\cdot k_6 + 5\right)}, \\
    z_6 =& \sqrt{3}+i,~w_6 = 2+i \sqrt{6} \nonumber \\
    k_6 =& \cos\left(\frac{1}{3}\arctan\left(\sqrt{\frac{3}{2}}\right)\right),~
    l_6 = \sin\left(\frac{1}{3}\arctan\left(\sqrt{\frac{3}{2}}\right)\right). \nonumber
\end{align}

\subsubsection{Seven dimensions}

As for $v_6(\vec{x}_6)$ we use symmetry to visualize $v_7(\vec{x}_7)$.
We select a subspace of $\R^7$ that contains all symmetrical points $(x_1,x_2,x_3,0,-x_3,-x_2,-x_1)$ on the sphere.

The coordinates in Figure \ref{fig:g7_012plots_a} are related to $\vec{x}_7$ by
\begin{align} \label{eq:xt_transform7}
  \vec x_7=
  \MAT{
  -1 & 0 & 0\\
  0&-1&0\\
  0&0&-1\\
  0&0&0\\
  0&0&1\\
  0&1&0\\
  1&0&0
  }
  \MAT{1/\sqrt{2}&0&0\\0&1/\sqrt{2}&0\\0&0&1/\sqrt{2}}
  \vec t.
\end{align}

\TWOFIG{g7_012plots}{Plot of $v_7(\vec{x}_7)$ over points on the unit sphere.}
                    {Plot with $\vec{t}$-basis given by \eqref{eq:xt_transform7}.}
                    {Plot with $\theta$ and $\phi$ given by \eqref{eq:t_spherical}.}

In Figure \ref{fig:g7_012plots} $48$ extreme points are visible just like it was for the six-dimensional case. This is expected since the transformation corresponds to choosing $x_4 = 0$ which restricts us to a six-dimensional subspace of $\mathbb{R}^7$ which can then be visualized in the same way as the six-dimensional case. The remaining extreme points can be found using arguments analogous the five-dimensional case.

By theorem \ref{thm:extreme_points_analytical} the extreme points on the unit sphere for $v_4$ is described by the roots of this polynomial
\[ P_7(x) = x^7	-\frac{1}{2} x^5 + \frac{5}{84} x^3	- \frac{5}{3528} x. \]

The roots of $P_7(x)$ are:
\begin{align}
 x_{71} =& -x_{77},~~x_{72} = -x_{76},~~x_{73} = -x_{75},~~x_{74} = 0, \nonumber \\
 x_{75} =& \frac{(-1)^{\frac{3}{4}}}{2\sqrt{21}}\left(14 i -\sqrt[3]{14} \left(z_6 w_6^\frac{1}{3}+\overline{z}_6\overline{w}_6^\frac{1}{3}\right)\right)^\frac{1}{2} \nonumber \\
        =& \frac{1}{2\sqrt{21}}\sqrt{14 - 2 \sqrt{14} \left(\sqrt{3} l_6 - k_6\right)}, \\
 x_{76} =& \frac{(-1)^\frac{1}{4}}{2\sqrt{21}}\left(-14 i -\sqrt[3]{14} \left(\overline{z}_6 w_6^\frac{1}{3}+z_6 \overline{w}_6^\frac{1}{3}\right)\right)^\frac{1}{2} \nonumber \\
        =& \frac{1}{2\sqrt{21}}\sqrt{14 - 2 \sqrt{14} \left(\sqrt{3} l_7 + k_7\right)}, \\
 x_{77} =& \sqrt{\frac{1}{42}} \left(\sqrt[3]{14} \left(z_6^\frac{1}{3}+ \overline{w}_6^\frac{1}{3}\right)+5\right)^\frac{1}{2} \nonumber \\
        =& \sqrt{\frac{1}{42} \left(2\sqrt{14} k_7 + 5 \right)}, \\
    z_6 =& \sqrt{3}+i,~w_6 = 2+i \sqrt{10} \nonumber \\
    k_7 =& \cos\left(\frac{1}{3}\arctan\left(\sqrt{\frac{5}{2}}\right)\right), \nonumber \\
    l_7 =& \sin\left(\frac{1}{3}\arctan\left(\sqrt{\frac{5}{2}}\right)\right). \nonumber
\end{align}

\section{Some limit theorems involving the\\ generalized Vandermonde matrix} \label{sec:power_limit}

Let $D_k$ be the diagonal matrix
\[
D_k=\diag\PAR{\frac{1}{0!},\frac{1}{1!},\cdots,\frac{1}{(k-1)!}}.
\]

\begin{theorem} \label{thm:gvanderpower}
For any $\vec x\in \C^n$ and $\vec a\in \C^m$ with $x_j\neq 0$ for all $j$ we have
\begin{equation}
 G_{m n}(\vec x,\vec a)=\lim_{k\rightarrow \infty} V_{k m}(\vec a)^T D_k V_{k n}(\log \vec x), \label{eq:gvanderpower}
\end{equation}
where the convergence is entry-wise, $\log \vec x=(\log x_1,\cdots,\log x_n)$ and the branch of the complex logarithm $\log(\cdots)$ is fixed and defines the expression $x_j^{a_i}$ by
\[
x_j^{a_i}\coloneqq e^{a_i \log x_j}.
\]
\end{theorem}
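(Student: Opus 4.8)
The key observation is that the entry $(i,j)$ of the right-hand side of \eqref{eq:gvanderpower} is an explicit finite sum that, as $k\to\infty$, becomes the Taylor series of the exponential. The plan is to compute this entry directly. The matrix $V_{km}(\vec a)^T$ has $(i,\ell)$-entry $a_i^{\ell-1}$ (for $1\le i\le m$, $1\le \ell\le k$), the matrix $D_k$ contributes the factor $1/(\ell-1)!$, and $V_{kn}(\log\vec x)$ has $(\ell,j)$-entry $(\log x_j)^{\ell-1}$. Multiplying the three matrices, the $(i,j)$-entry of the product is
\[
\sum_{\ell=1}^{k} a_i^{\ell-1}\,\frac{1}{(\ell-1)!}\,(\log x_j)^{\ell-1}
=\sum_{\ell=0}^{k-1}\frac{\left(a_i\log x_j\right)^{\ell}}{\ell!}.
\]

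This is precisely the $(k-1)$-th partial sum of the power series for $e^{a_i\log x_j}$. Since the complex exponential series converges for every argument, the partial sums converge to $e^{a_i\log x_j}$, which by the definition fixed in the statement equals $x_j^{a_i}$, i.e. the $(i,j)$-entry of $G_{mn}(\vec x,\vec a)$. Because there are only finitely many entries (indexed by $1\le i\le m$, $1\le j\le n$), entry-wise convergence holds for each of them, which is exactly the claimed mode of convergence. The hypothesis $x_j\neq 0$ is used precisely to ensure $\log x_j$ is defined (for the chosen branch), so that the entries $(\log x_j)^{\ell-1}$ make sense.

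There is essentially no obstacle here: the only care needed is bookkeeping with the index shifts (rows/columns numbered from $1$ versus exponents numbered from $0$) and the transpose, to make sure the product genuinely telescopes into $\sum_{\ell=0}^{k-1}(a_i\log x_j)^\ell/\ell!$ rather than some off-by-one variant. I would write out the three factors' entries carefully, state the matrix product, and then invoke convergence of the exponential series. One may optionally remark that the convergence is in fact locally uniform in $(\vec a,\log\vec x)$ on compact sets, though this is not needed for the statement.
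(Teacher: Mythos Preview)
Your proposal is correct and is essentially the same argument as the paper's proof: both reduce to recognizing that the $(i,j)$-entry of the product is the partial sum $\sum_{\ell=0}^{k-1}(a_i\log x_j)^\ell/\ell!$ of the exponential series for $e^{a_i\log x_j}=x_j^{a_i}$. The only cosmetic difference is that the paper first records the observation in the slightly more general form \eqref{eq:altpower} (for any matrix $[f_j(a_i)]_{mn}$ with analytic $f_j$) before specializing to $f_j(a)=e^{a\log x_j}$, whereas you compute the triple product entrywise directly; the substance is identical.
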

We will prove this theorem after presenting some results for a larger class of matrices.

Generalized Vandermonde matrices is a special case of matrices on the form
\begin{equation*}
 A_{m n}(\vec x,\vec a)=\BPAR{f(x_j,a_i)}_{m n},
\end{equation*}
where $f$ is a function.
Suppose that $\vec x$ is fixed, then each entry will be a function of one variable
\begin{equation}
 A_{m n}(\vec x,\vec a)=\BPAR{f_j(a_i)}_{m n}. \label{eq:altfunc}
\end{equation}
If all these functions $f_j$ are analytic in a neighborhood of some common $a_0$ then the functions have power series expansions around $a_0$.
If we denote the power series coefficients for function $f_j$ as $c_{j*}$ then we may write
\begin{align}
A_{m n}(\vec x,\vec a)&=\BPAR{\sum_{k=0}^{\infty} c_{jk}(a_i-a_0)^k}_{m n} \nonumber
\\
&=\lim_{k\rightarrow \infty}\BPAR{(a_i-a_0)^{j-1}}_{m k} \BPAR{c_{j(i-1)}}_{k n} \nonumber
\\
&=\lim_{k\rightarrow \infty} V_{k m}(\vec a-a_0)^T\BPAR{c_{j(i-1)}}_{k n}, \label{eq:altpower}
\end{align}
where convergence holds for each entry of $A_{m n}$ and 
\[ \vec a-a_0=(a_1-a_0,\ldots,a_m-a_0). \]

\begin{proof}[Proof of Theorem \ref{thm:gvanderpower}]
With the complex logarithmic function $\log(\cdots)$ defined to lie in a fixed branch we may write generalized Vandermonde matrices as
\begin{align*}
&   G_{m n}(\vec x,\vec a)=\BPAR{f_j(a_i)}_{m n},
\end{align*}
where
\[
 f_j(a_i)=x_j^{a_i}=e^{a_i \log x_j}, \quad 1\leq j\leq n.
\]
These functions $f_j$ are analytic everywhere whenever $x_j\neq 0$.
By the power series of the exponential function we have
\[
 f_j(a_i)=e^{a_i \log x_j}=\sum_{k=0}^{\infty} \frac{(a_i \log x_j)^k}{k!}=\sum_{k=0}^{\infty} \frac{(\log x_j)^k}{k!}a_i^k,
\]
and by Equation (\ref{eq:altpower}) we get
\begin{align*}
   G_{m n}(\vec x,\vec a) & =\lim_{k\rightarrow \infty} V_{k m}(\vec a)^T\BPAR{\frac{(\log x_j)^{i-1}}{(i-1)!}}_{k n} \\
&  =\lim_{k\rightarrow \infty} V_{k m}(\vec a)^T \diag\PAR{\frac{1}{0!},\cdots,\frac{1}{(k-1)!}} V_{k n}(\log \vec x),
\end{align*}
which concludes the proof.
\end{proof}

\begin{theorem} \label{thm:generaldetfractionlimit}
If $n\geq 2$, $\vec x,\vec a\in \C^n$, $x_j\neq 0$ for all $j$ and $v_n(\vec a)\neq 0$ then
\begin{align*}
    \lim_{t\rightarrow 0} \frac{ g_n(\vec x,\vec at) }{ v_n(\vec at) }		=\PAR{\prod_{k=1}^{n}\frac{1}{(k-1)!}} \PAR{\prod_{1\leq i<j\leq n} (\log\,x_j-\log\,x_i) }.
\end{align*}
\end{theorem}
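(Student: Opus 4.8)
The plan is to expand the numerator $g_n(\vec x,\vec a t)$ as a power series in $t$ about $t=0$, read off its leading term, and divide by the denominator, which is handled by elementary scaling. First, by Proposition~\ref{prop:vandermondeDeterminant} one has $v_n(\vec a t)=\prod_{1\le i<j\le n}(a_jt-a_it)=t^{\,n(n-1)/2}\,v_n(\vec a)$; since $v_n(\vec a)\neq 0$ this is nonzero for every $t\neq 0$, so the quotient $g_n(\vec x,\vec a t)/v_n(\vec a t)$ is well defined on a punctured neighbourhood of $0$.

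Next I would expand each entry of $G_n(\vec x,\vec a t)$ via $x_j^{a_i t}=e^{a_i t\log x_j}=\sum_{\ell\ge 0}\frac{(\log x_j)^\ell}{\ell!}(a_i t)^\ell$ and expand the determinant multilinearly in its rows — the same manipulation underlying \eqref{eq:altpower} and \eqref{eq:gvanderpower}. Since every entry is entire in $t$ and the determinant is a fixed polynomial in the entries, $g_n(\vec x,\vec a t)$ is entire in $t$ and
\[
g_n(\vec x,\vec a t)=\sum_{\ell_1,\dots,\ell_n\ge 0}t^{\,\ell_1+\cdots+\ell_n}\Big(\prod_{i=1}^{n}a_i^{\ell_i}\Big)\det\Big[\frac{(\log x_j)^{\ell_i}}{\ell_i!}\Big]_{n}.
\]
A summand vanishes unless $\ell_1,\dots,\ell_n$ are pairwise distinct (two equal indices produce two equal rows), and over distinct nonnegative $n$-tuples the exponent $\ell_1+\cdots+\ell_n$ attains its minimum $n(n-1)/2$ exactly on the permutations of $(0,1,\dots,n-1)$. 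Hence $g_n(\vec x,\vec a t)=t^{\,n(n-1)/2}\,C+O(t^{\,n(n-1)/2+1})$, where $C$ collects the $n!$ contributions from those permutations.

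To compute $C$, write such a tuple as $(\ell_1,\dots,\ell_n)=(\sigma(1)-1,\dots,\sigma(n)-1)$ for $\sigma\in S_n$; pulling $1/(\sigma(i)-1)!$ out of row $i$ yields the common factor $\prod_{k=1}^{n}\frac{1}{(k-1)!}$, reordering the rows into increasing exponent order contributes $\sgn(\sigma)$, and what is left is $\det[(\log x_j)^{i-1}]_n=v_n(\log\vec x)=\prod_{1\le i<j\le n}(\log x_j-\log x_i)$ by Proposition~\ref{prop:vandermondeDeterminant}. Therefore
\[
C=\Big(\prod_{k=1}^{n}\frac{1}{(k-1)!}\Big)v_n(\log\vec x)\sum_{\sigma\in S_n}\sgn(\sigma)\prod_{i=1}^{n}a_i^{\sigma(i)-1}=\Big(\prod_{k=1}^{n}\frac{1}{(k-1)!}\Big)v_n(\log\vec x)\,v_n(\vec a),
\]
the last sum being the Leibniz expansion of $\det[a_i^{k-1}]_n=v_n(\vec a)$. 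Dividing by $v_n(\vec a t)=t^{\,n(n-1)/2}v_n(\vec a)$ and letting $t\to 0$ cancels $t^{\,n(n-1)/2}$ and the nonzero factor $v_n(\vec a)$, leaving exactly the asserted value.

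The one point that needs a word of care is the justification that the Taylor coefficients of the determinant are obtained by this naive term-by-term expansion of the rows; this is immediate because only finitely many low-order coefficients are relevant and each of them depends on only finitely many coefficients of the (entire) entries, so one may replace each entry by a polynomial truncation and expand the resulting polynomial identity. Everything else — the minimality of $n(n-1)/2$ among sums of distinct nonnegative integers, the sign bookkeeping in the row reordering, and the identification of the two Vandermonde determinants — is routine.
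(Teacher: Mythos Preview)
Your proof is correct and follows the same underlying idea as the paper: expand each entry $x_j^{a_it}$ as a power series in $t$, identify the lowest-order contribution to the determinant, and divide by $v_n(\vec a t)=t^{n(n-1)/2}v_n(\vec a)$.

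The only difference is in packaging. The paper routes the expansion through Theorem~\ref{thm:gvanderpower} and the Cauchy--Binet formula, obtaining Lemma~\ref{lem:generaldetlimit}, which expresses $g_n$ as a sum over strictly increasing index vectors $\vec q$ of products of minors; the leading term is then the single summand $\vec q=\vec i_n$. You instead expand the determinant directly by multilinearity in its rows, summing over all tuples $(\ell_1,\dots,\ell_n)$ and discarding those with repeated entries; the leading term then splits into $n!$ permutation pieces that you reassemble via Leibniz into $v_n(\vec a)$. The two organisations are equivalent --- your permutation sum is exactly the Leibniz expansion of the paper's minor $V_{kn}(\vec a)^T\!\MINOR{\vec i_n\\\vec i_n}$ --- but your version is slightly more self-contained, since it avoids invoking Cauchy--Binet and the auxiliary Lemma~\ref{lem:generaldetlimit}.
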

We will prove this theorem after some intermediate results.

Let $\vec i_n=(1,2,\cdots,n)$, $P_{kn}$ be the set of all vectors $\vec p\in \N^n$ such that 
\[ 1\leq p_1<p_2<\cdots<p_n\leq k \] 
and $Q_{kn}=\{\vec p\in P_{kn}:p_n=k\}$.
An $N\times N$ minor of a matrix $A\in M_{m n}$ is determined by two vectors $\vec k\in P_{mN}$ and $\vec l\in P_{nN}$ and is defined as
\[
 A\MINOR{\vec k\\\vec l}\coloneqq \det \BPAR{A_{k_il_j}}_{N N}.
\]
Using this notation the determinant of the product of two matrices $A\in M_{n k}$ and $B\in M_{k n}$ can be written using the Cauchy-Binet formula \cite[p. 18]{Serre} as
\begin{equation}
\det(AB)=\sum_{\vec p\in P_{kn}} A\MINOR{\vec i_n\\\vec p} \cdot B\MINOR{\vec p\\\vec i_n}. \label{eq:cauchybinet}
\end{equation}

\begin{lemma} \label{lem:generaldetlimit}
If $\vec x,\vec a\in \C^n$ and $x_j\neq 0$ for all $j$ then we can write the determinant of generalized Vandermonde matrices as
\begin{align*}
&    g_n(\vec x,\vec a)=
\\&  \sum_{k=n}^{\infty} \sum_{\vec q\in Q_{kn}}
		V_{k n}(\vec a)^T \MINOR{\vec i_n\\\vec q}
		\cdot D_k \MINOR{\vec q\\\vec q}
		\cdot V_{k n}(\log \vec x) \MINOR{\vec q\\\vec i_n}.
\end{align*}
\end{lemma}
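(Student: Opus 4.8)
The plan is to take the factorization of the generalized Vandermonde matrix from Theorem \ref{thm:gvanderpower}, take determinants, expand each finite product by the Cauchy--Binet formula \eqref{eq:cauchybinet}, and then reorganize the resulting limit of finite sums into the double series by sorting the index vectors according to their largest entry.

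First I would apply $\det$ to both sides of \eqref{eq:gvanderpower}. Since $\det$ is a polynomial in the matrix entries it is continuous, so the entry-wise convergence in Theorem \ref{thm:gvanderpower} yields
\[ g_n(\vec x,\vec a)=\det G_n(\vec x,\vec a)=\lim_{k\to\infty}\det\PAR{V_{k n}(\vec a)^T D_k\,V_{k n}(\log\vec x)}. \]
Next, for each fixed $k\geq n$ I would write the product as $AB$ with $A=V_{k n}(\vec a)^T D_k\in M_{n k}$ and $B=V_{k n}(\log\vec x)\in M_{k n}$ and apply \eqref{eq:cauchybinet}, getting $\det(AB)=\sum_{\vec p\in P_{k n}}A\MINOR{\vec i_n\\\vec p}\cdot B\MINOR{\vec p\\\vec i_n}$. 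Because $D_k$ is diagonal, multiplying $V_{k n}(\vec a)^T$ on the right by $D_k$ merely rescales its columns, so $A\MINOR{\vec i_n\\\vec p}=\PAR{\prod_{i=1}^n\tfrac{1}{(p_i-1)!}}V_{k n}(\vec a)^T\MINOR{\vec i_n\\\vec p}$, and $\prod_{i=1}^n\tfrac{1}{(p_i-1)!}=D_k\MINOR{\vec p\\\vec p}$ since $D_k$ is diagonal. Hence the $k$-th term equals $\sum_{\vec p\in P_{k n}}V_{k n}(\vec a)^T\MINOR{\vec i_n\\\vec p}\cdot D_k\MINOR{\vec p\\\vec p}\cdot V_{k n}(\log\vec x)\MINOR{\vec p\\\vec i_n}$.

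The key step is to pass from this limit of finite sums to the double series in the statement. I would use the disjoint decomposition $P_{k n}=\bigsqcup_{\ell=n}^{k}Q_{\ell n}$, obtained by sorting each $\vec p\in P_{k n}$ by its top entry $p_n=\ell$. Moreover, for $\vec q\in Q_{\ell n}$ with $\ell\leq k$ all three minors are unchanged if $k$ is replaced by $\ell$: the selected rows and columns are all $\leq\ell$, and $D_k$ and $D_\ell$ agree on those indices. Therefore the $k$-th quantity in the displayed limit equals $\sum_{\ell=n}^{k}\sum_{\vec q\in Q_{\ell n}}V_{\ell n}(\vec a)^T\MINOR{\vec i_n\\\vec q}\cdot D_\ell\MINOR{\vec q\\\vec q}\cdot V_{\ell n}(\log\vec x)\MINOR{\vec q\\\vec i_n}$, i.e. it is exactly the partial sum up to $\ell=k$ of the series in the statement. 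Letting $k\to\infty$ and invoking the first display shows that this series converges and has sum $g_n(\vec x,\vec a)$, which is the claim (after renaming $\ell$ as $k$).

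The main obstacle is not analytic but the bookkeeping in this last paragraph: verifying that $P_{k n}=\bigsqcup_{\ell=n}^{k}Q_{\ell n}$ and, crucially, that each minor depends only on the chosen index set and not on the ambient size $k$, so that the partial sums telescope into the series. Once that is checked, convergence of the series is automatic, inherited from the convergence asserted in Theorem \ref{thm:gvanderpower}, and no further estimates are needed.
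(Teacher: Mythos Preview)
Your proposal is correct and follows essentially the same route as the paper: apply $\det$ to \eqref{eq:gvanderpower} using continuity, expand each finite product by Cauchy--Binet, peel off the diagonal factor $D_k$ as a product of reciprocal factorials, and then regroup $P_{kn}$ by the top index to obtain the series over $Q_{\ell n}$. Your write-up is in fact a bit more explicit than the paper's about the bookkeeping step---you spell out both the disjoint decomposition $P_{kn}=\bigsqcup_{\ell=n}^{k}Q_{\ell n}$ and the fact that the minors are insensitive to the ambient size $k$---whereas the paper simply remarks that passing from $k$ to $k+1$ adds exactly the terms with $\vec p\in Q_{(k+1)n}$.
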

\begin{proof}
By Equation (\ref{eq:gvanderpower}), the continuity of the determinant function, the associativity of matrix multiplication, and Equation (\ref{eq:cauchybinet}) we get
\begin{align*}
g_n(\vec x,\vec a)&=\det\PAR{ \lim_{k\rightarrow \infty} V_{k n}(\vec a)^T D_k V_{k n}(\log \vec x) }
\\
&=\lim_{k\rightarrow \infty} \det\PAR{V_{k n}(\vec a)^T D_k V_{k n}(\log \vec x) }
\\
&=\lim_{k\rightarrow \infty} \det\PAR{\PAR{V_{k n}(\vec a)^T D_k} V_{k n}(\log \vec x) }
\\
&=\lim_{k\rightarrow \infty} \sum_{\vec p\in P_{kn}} \PAR{V_{k n}(\vec a)^T D_k}\MINOR{\vec i_n\\\vec p} \cdot V_{k n}(\log \vec x)\MINOR{\vec p\\\vec i_n}.
\end{align*}
We recognizing that $D_k$ is a diagonal matrix that scales the columns of $V_{k n}(\vec a)^T$:
\[
\PAR{V_{k n}(\vec a)^T D_k}(i,j)=\PAR{V_{k n}(\vec a)^T}(i,j) D_k(j,j),
\]
and so
\begin{align*}
\PAR{V_{k n}(\vec a)^T D_k}\MINOR{\vec i_n\\\vec p} & = \PAR{V_{k n}(\vec a)^T}\MINOR{\vec i_n\\\vec p} \prod_{l=1}^{n} D_k(p_l,p_l)
\\
& = \PAR{V_{k n}(\vec a)^T}\MINOR{\vec i_n\\\vec p} \cdot D_k\MINOR{\vec p\\\vec p},
\end{align*}
that is
\begin{align*}
 g_n(\vec x,\vec a) =\lim_{k\rightarrow \infty} \sum_{\vec p\in P_{kn}} V_{k n}(\vec a)^T\MINOR{\vec i_n\\\vec p} \cdot D_k\MINOR{\vec p\\\vec p} \cdot V_{k n}(\log \vec x)\MINOR{\vec p\\\vec i_n},
\end{align*}
and the result follows by recognizing that as $k$ is increased to $k+1$ in the limit, the sum will contain all the previous terms (corresponding to $\vec p\in P_{kn}$) with the addition of new terms corresponding to $\vec p\in Q_{(k+1)n}$.
\end{proof}

\begin{proof}[Proof of Theorem \ref{thm:generaldetfractionlimit}]
When the summation in Lemma \ref{lem:generaldetlimit} is applied to $g_n(\vec x,\vec at)$the resulting expression will contain factors
\[
V_{k n}(\vec at)^T\MINOR{\vec i_n\\\vec q}= t^{E(\vec q)} V_{k n}(\vec a)^T\MINOR{\vec i_n\\\vec q}.
\]
where
\[
E(\vec q)=\sum_{j=1}^{n}(q_j-1).
\]
The lowest exponent for $t$ will occur exactly once, for $k=n$, when $\vec q=\vec i_n$, and it is
\[
M=E(\vec i_n)=\sum_{j=1}^{n}(j-1)=\frac{n(n-1)}{2},
\]
and by splitting the sum we get
\begin{align*}
 g_n(\vec x,\vec at) &= t^{M} V_n(\vec a)^T \MINOR{\vec i_n\\\vec i_n}
		\cdot D_n \MINOR{\vec i_n\\\vec i_n}
		\cdot V_n(\log \vec x) \MINOR{\vec i_n\\\vec i_n}
		+\mathcal{O}(t^{M+1})
\\& =t^{M} v_n(\vec a) \PAR{\prod_{k=1}^{n}\frac{1}{(k-1)!}}v_n(\log \vec x) + \mathcal{O}(t^{M+1}).
\end{align*}
The final result can now be proven by rewriting the denominator in the theorem as $v_n(\vec at)=t^{M} v_n(\vec a)$, taking the limit, and finally expanding $v_n(\log \vec x)$ by Proposition \ref{prop:vandermondeDeterminant}.
\end{proof}


\section{Conclusions}

From the visualizations in Section \ref{sec:visual3d} it was concluded that the extreme points for the ordinary Vandermonde determinant on the unit sphere in $\mathbb{R}^3$ seems to have some interesting symmetry properties and in Section \ref{sec:analytical} it was proven that extreme points could only appear given certain symmetry conditions, see Remark \ref{rem:r_symmetric} and Theorem \ref{thm:coefficients}. This also allowed visualization of the extreme points of the ordinary Vandermonde determinant on the unit sphere in some higher dimensional spaces, $\mathbb{R}^n$, more specifically for $n = 4,5,6,7$.

The exact location of the extreme points for any finite dimension could also be determined as roots of the polynomial given in Theorem \ref{thm:extreme_points_analytical}. Exact solutions for these roots were also given for the dimensions that were visualized, see Section \ref{sec:analytical_v3} and Section \ref{sec:visualnd}.

Some visual investigation of the generalized Vandermonde matrix was also done in Section \ref{sec:visual3d} but no clear way to find or illustrate where the extreme points was given. The authors intend to explore this problem further.

In Section \ref{sec:power_limit} some limit theorems that involve factorization of a generalized Vandermonde matrix using an ordinary Vandermonde matrix and the ratio between the determinant of a generalized Vandermonde matrix and the determinant of a related ordinary Vandermonde matrix.

\end{document}